\definecolor{mygray}{gray}{0.85}
\renewcommand{\leq}{\leqslant}
\renewcommand{\geq}{\geqslant}
\def\subsection{\@startsection{subsection}{3}%
  \z@{.5\linespacing\@plus.7\linespacing}{.3\linespacing}%
  {\bfseries\centering}}
\def\subsubsection{\@startsection{subsubsection}{3}%
  \z@{.5\linespacing\@plus.7\linespacing}{.3\linespacing}%
  {\centering}}
\def\myfnt{\ifx\protect\@typeset@protect\expandafter\footnote\else\expandafter\@gobble\fi}
\newtheorem{theorem}{Theorem}[section]
\newtheorem{corollary}[theorem]{Corollary}
\newtheorem{definition}[theorem]{Definition}
\newtheorem{lemma}[theorem]{Lemma}
\newtheorem{proposition}[theorem]{Proposition}
\newtheorem{example}[theorem]{Example}
\newtheorem{fact}[theorem]{Fact}
\newtheorem{conclusion}[theorem]{Conclusion}
\newtheorem{remark}[theorem]{Remark}
\newtheorem{notation}[theorem]{Notation}
\newtheorem{convention}[theorem]{Convention}
\newtheorem{definition/fact}[theorem]{Definition/Fact}
\newtheorem*{theorem1.2}{Theorem~1.2}
\newcounter{claimcounter}
\newcommand{\pureindep}[1][]{%
  \mathrel{
    \mathop{
      \vcenter{
        \hbox{\oalign{\noalign{\kern-.3ex}\hfil$\vert$\hfil\cr
              \noalign{\kern-.7ex}
              $\smile$\cr\noalign{\kern-.3ex}}}
      }
    }\displaylimits_{#1}
  }
}
\newcommand{\indep}[2]{%
  \mathrel{
    \mathop{
      \vcenter{
        \hbox{%
\oalign{
\noalign{\kern-.3ex}\hfil$\vert$\hfil\cr
              \noalign{\kern-.7ex}
              $\smile$\cr\noalign{\kern-.3ex}
}
}
      }
}^{\!\!\!\!\!#2}_{\!\!\hspace{-0.1em}#1}
  }
}
\newcommand{\dindep}[1][]{\indep{#1}{d}}
\begin{document}

\begin{abstract} We use a variation on Mason's $\alpha$-function as a pre-dimension function to construct a not one-based $\omega$-stable plane $P$ (i.e. a simple rank $3$ matroid) which does not admit an algebraic representation (in the sense of matroid theory) over any field. Furthermore, we characterize forking in $Th(P)$, we prove that algebraic closure and intrinsic closure coincide in $Th(P)$, and we show that $Th(P)$ fails weak elimination of imaginaries, and has Morley rank~$\omega$.
\end{abstract}

\title{A New $\omega$-Stable Plane}
\thanks{The author would like to thank heartfully John Baldwin for helpful discussions related to this paper. The present paper was written while the author was a post-doc research fellow at the Einstein Institute of Mathematics of the Hebrew University of Jerusalem, supported by European Research Council grant 338821.}

\author{Gianluca Paolini}
\address{Department of Mathematics ``Giuseppe Peano'', University of Torino, Via Carlo Alberto 10, 10123, Italy.}
\email{gianluca.paolini@unito.it}

\date{\today}
\maketitle


\section{Introduction}


	In this study we use methods from combinatorial theory and model theory to construct  a simple rank $3$ matroid that is new from both perspectives. 
%
%
%
As well-known to experts, the class of simple rank $3$ matroids corresponds canonically to the class of linear spaces, or, equivalently, to the class of geometric lattices of rank $3$.
	Matroid theorists refer to simple rank $3$ matroid also as {\em planes}, and so we will adopt this terminology in this study. In \cite{geometric_lattices} we used Crapo's theory of one point-extensions of matroids \cite{crapo} to construct examples of $\omega$-stable (one of the most important dividing lines in model theory) planes in the context of abstract elementary classes. In the present study we use Mason's $\alpha$-function of matroid theory and the amalgamation construction known as Hrushovski's construction to build an $\omega$-stable plane in the context of classical first-order logic with an interesting combination of combinatorial and model theoretic properties. 
	
	Mason's $\alpha$-function is a naturally arising notion of complexity for matroids introduced by Mason \cite{mason} in his study of so-called gammoids, a now well-known class of matroids arising from paths in graphs. Interestingly, Evans recently showed \cite{evans} that the class of strict gammoids corresponds exactly to the class of finite geometries considered by Hrushovski in his celebrated refutation of Zilber's conjecture \cite{udi}. A model theoretic analysis of Mason's $\alpha$-function similar to our approach but quite different in motivation has also appeared in \cite{omer2, omer1}\footnote{Our study and \cite{omer2, omer1} were concurrent and both inspired by Evans's work \cite{evans}.}.
	
	What is referred to as Hrushovski's constructions is a method of constructing model theoretically well-behaved structures via an amalgamation procedure which makes essential use of a certian predimension function. This amalgamation construction results in a countable structure (the so-called ``Hrushovski's generic'') carrying the additional structure of an infinite dimensional matroid, which controls important model theoretic properties of the structure constructed (which in our case is a simple rank $3$ matroid). In this type of constructions, the specifics of the predimension function depend on the case at hand, and in our case the predimension function is a mild but crucial variation of Mason's $\alpha$ function (cf. Definition~\ref{def_delta}).
	
	We believe that our variation on Mason's $\alpha$-function as a predimension function is of independent interest from various points of view. On the combinatorial side, we think that the combinatorial consequences of the realization that Mason's $\alpha$-function is essentially a predimension  should be properly explored. On the model theoretic side, we believe that the ``collapsed\footnote{The so-called ``collapse'' is a technical variation of the Hrushovki's construction which ensures the satisfaction of further important model-theoretic properties, as e.g. uncountable categoricity: only one model up to isomorphism in every uncountable cardinalily.} version'' of our construction leads to interesting connections with {\em design theory} (currently explored in works in preparation joint with John Baldwin \cite{paper_with_John, John_paper}), and in particular with Steiner $k$-systems. 
	
	Before stating our main theorem we spend few motivating words introducing the model theoretic properties appearing in it. Model theory is the study of classes of structures using logical properties, which are also referred to as dividing lines (since they are often accompanied by a dichotomous behavior). Among the various properties considered by logicians there are certain properties which are more ``geometric'' in nature. These properties are called ``geometric'' since they are given imposing conditions on certain infinite dimensional matroids associated with the structures. 
	The canonical example of this kind of structures is the class of {\em strongly minimal structures}, where the model theoretic operator of algebraic closure determines an infinite dimensional matroid. This context has later been extended to uncountably categorical structures (one model up to isomorphism in every uncountable cardinality), and even more generally to $\omega$-stable structures (cf. \cite[Chapter~6]{marker}).
	In this spirit, one of the geometric properties of the kind mentioned above is the notion of being {\em one-based}, which on strongly minimal structures corresponds to the natural notion of local modularity of the lattice of closed sets of the associated matroid.
	
\smallskip
		
	We prove the following theorem:


	\begin{theorem}\label{second_main_th} There exists a pre-dimension function $\delta$ on the class of finite planes (finite simple rank $3$ matroids) such that the corresponding ``Hrushovski's generic'' (cf. Definition~\ref{defgen}) exists, and so it is a plane $P$ (i.e. a simple rank $3$ matroid, cf.~Definition~\ref{def_matroid_intro}), and it satisfies the following conditions:
	\begin{enumerate}[(1)]
	\item $P$ contains the ``non-Desarguesian'' matroid (cf.~Figure~\ref{figure1}, or \cite[pg. 139]{welsh}), and so it is not algebraic (in the sense of matroid theory);
	\item in $Th(P)$ intrinsic closure and algebraic closure coincide (cf. Definition \ref{geom_clo});
	\item\label{weak_elim} $Th(P)$ does {\em not} have weak elimination of imaginaries (cf. Definition~\ref{def_weak_elim});
	\item $Th(P)$ is not one-based (cf. Definition~\ref{def_one_based});
	\item $Th(P)$ is $\omega$-stable and has Morley rank $\omega$ (cf. \cite[Chapter~6]{marker});
	\item over algebraically closed sets forking in $Th(P)$ corresponds to the canonical amalgamation introduced in \cite[Theorem 4.2]{geometric_lattices} (cf. Remark \ref{remark_canonical_amalgam}).
	\end{enumerate}	
\end{theorem}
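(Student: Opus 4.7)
The plan is to run a Hrushovski amalgamation construction with the predimension $\delta$ (a variation of Mason's $\alpha$-function, cf.\ Definition~\ref{def_delta}) applied to the class $\mathcal{K}$ of finite planes, equipped with the strong embedding notion $A \leq B$ iff $\delta(A') \geq \delta(A)$ for every $A \subseteq A' \subseteq B$. The foundational combinatorial step is to verify submodularity of $\delta$, which ensures that the intrinsic closure operator $A \mapsto \mathrm{icl}(A)$ is well defined, and then to establish Hereditary, Joint Embedding and Amalgamation Properties for $(\mathcal{K}, \leq)$. Amalgamation is the main obstacle: planes are extremely rigid, since three non-collinear points determine via iterated joins a sub-plane containing many forced new lines, so a naive free amalgam of two planes over a common $\leq$-substructure is typically not a plane. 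The resolution is to use a canonical amalgam which, at each step, adds exactly the collinearities forced by the rank-$3$ axioms and nothing more; this canonical amalgam will coincide with the one isolated in \cite[Theorem 4.2]{geometric_lattices}, which is why item~(6) drops out once forking is analysed.

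Granting this construction, the generic $P$ is defined as the Fra\"iss\'e--Hrushovski limit of $(\mathcal{K}, \leq)$ (Definition~\ref{defgen}), and is itself a plane because being a plane is a local condition on triples of points. Item~(1) requires only that the non-Desarguesian matroid belongs to $\mathcal{K}$ (it is a finite plane, and its $\delta$-value should be computed once and for all), after which genericity supplies a strong embedding into $P$; non-algebraicity of the non-Desarguesian matroid over any field is classical and transfers to $P$ because algebraic representability is hereditary on sub-matroids. For item~(2), the coincidence of intrinsic and algebraic closure, the routine inclusion is that $\mathrm{acl}(A) \supseteq \mathrm{icl}(A)$ because intrinsic closure is finite and $\emptyset$-definable from $\delta$-data; the converse comes from homogeneity of $P$ over $\leq$-closed sets, which lets one move free realisations of a generic extension by automorphisms and thus produces infinitely many conjugates of any point lying outside $\mathrm{icl}(A)$.

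Items~(3)--(5) are then extracted from standard Hrushovski-construction technology once $\mathrm{acl} = \mathrm{icl}$ and the amalgamation analysis are in hand. For item~(5), $\omega$-stability is obtained from the usual counting of types over finite sets, which reduces to counting $\leq$-closed extensions compatible with the $\delta$-data, and Morley rank equals $\omega$ because $\delta$ attains arbitrarily large integer values on generic extensions while every configuration of finite Morley rank collapses to a fixed finite $\leq$-closed substructure. Item~(4) follows from~(1): the non-Desarguesian configuration witnesses the failure of local modularity of the combinatorial geometry of $P$, so $Th(P)$ cannot be one-based. For item~(3), a typical Hrushovski-style witness for the failure of weak elimination of imaginaries is the canonical parameter of a suitable definable family (e.g.\ the pencil of lines through a generic point, or a coset of an $\mathrm{icl}$-strictly-larger-than-$\mathrm{acl}$ quotient); one exhibits such a family whose canonical parameter is not interalgebraic with any real tuple from $P$.

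Finally, item~(6) is read off from the Hrushovski-style description of forking in generics of this type: $A \indep{C}{} B$ in $Th(P)$ iff $\mathrm{icl}(AC)$ and $\mathrm{icl}(BC)$ are in free amalgamation over $\mathrm{icl}(C)$ with respect to the canonical amalgam, which, when $C$ is algebraically closed (equivalently intrinsically closed, by~(2)), is precisely the amalgam of \cite[Theorem 4.2]{geometric_lattices}. Throughout, the chief technical obstacle is the construction of the canonical amalgam in $\mathcal{K}$ together with the proof that it preserves $\delta$-values and submodularity; once this is in place, all the model-theoretic items are relatively standard consequences of the combinatorial framework set up by Mason's $\alpha$-function.
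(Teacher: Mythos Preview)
Your overall architecture is correct and matches the paper: define $\delta$, verify lower semimodularity, set up $(\mathbf{K}_0,\leq)$, use the canonical amalgam of \cite{geometric_lattices} for AP, build the generic, and then read off (1)--(6). Items (1), (2) and (6) are handled essentially as you describe. But three of your specific arguments do not work.

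\textbf{Item (4).} You derive ``not one-based'' from (1), saying the non-Desarguesian configuration witnesses failure of local modularity of the combinatorial geometry of $P$. This conflates two different geometries. The non-Desarguesian matroid is a submatroid of the \emph{rank-$3$} structure on $P$; one-basedness in Definition~\ref{def_one_based} is a statement about forking, which in this setting is governed by the \emph{infinite-dimensional} $\mathrm{icl}$-pregeometry. There is no reason a rank-$3$ non-Desarguesian substructure should force a failure of the forking identity $A \pureindep[A\cap B] B$ for $\mathrm{acl}$-closed $A,B$. The paper instead gives a direct witness (Proposition~\ref{not_one_based}): take $C=\{p_1,p_2,p_3\}$ in general position, $B=C\cup\{q_1\}$ with $q_1$ generic over $C$, and $A=C\cup\{q_2\}$ with $q_2$ on the line $p_1\vee q_1$; then $A\cap B=C$, all three are $\leq$-closed, but $AB\neq A\oplus_C B$, so by Corollary~\ref{char_forking} we have $A\not\!\pureindep[C] B$.

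\textbf{Item (3).} Your proposed witnesses are problematic. The ``coset of an $\mathrm{icl}$-strictly-larger-than-$\mathrm{acl}$ quotient'' cannot exist, since by (2) $\mathrm{icl}=\mathrm{acl}$. The ``pencil of lines through a generic point'' is not a definable subset of $P$ (lines are not elements; cf.\ Convention~\ref{conv}). The actual witness is much simpler: take four collinear points $\{a,b,a',b'\}\leq\mathfrak{M}$ and let $X$ be the line through them. Then $X$ is definable over $\{a,b\}$ and over $\{a',b'\}$, and both pairs are algebraically closed (again by (2), since they are $\leq$-closed); there is no smallest $\mathrm{acl}$-closed set defining $X$.

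\textbf{Item (5), Morley rank $\omega$.} Your reason, ``$\delta$ attains arbitrarily large integer values on generic extensions'', does not give Morley rank $\omega$; it only says the $d$-dimension is unbounded. The paper's argument (following \cite{ziegler}) goes the other way: one shows that primitive extensions with $\delta(C/B)=0$ give strongly minimal types (Lemma~\ref{ziegler_lemma2}), so any $\delta=0$ extension has finite Morley rank bounded below by its decomposition length (Corollary~\ref{ziegler_cor}); then Proposition~\ref{prop_for_ziegler} exhibits, over a fixed $B$, single elements $q_k$ with $d(q_k/B)=0$ whose closures have decomposition length $k$. This produces $1$-types over $B$ of arbitrarily large finite Morley rank, forcing the generic $1$-type to have Morley rank $\geq\omega$, while the upper bound comes from the analysis of primitive extensions in Lemma~\ref{ziegler_lemma1}.
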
	

	As mentioned above, properties (2)-(6) of Theorem~\ref{second_main_th} are important dividing lines in model theory, and their satisfaction shows that our object is particularly well-behaved from this perspective; for an introduction to these notions see e.g. \cite[Chapter~6]{marker}. 
	In combination with these properties, the fact that our plane $P$ is not algebraic makes our plane particularly exotic. Non-algebraic planes are somewhat rare in nature, and in fact the existence of non-algebraic planes is a non-trivial fact due to Lindstr\"om \cite{lind, lind_alg}, who constructed in \cite{lind} an infinite familiy of non-algebraic finite planes. Furthermore, this shows that our variation on Mason's $\alpha$-function is crucial, since the class of finite simple matroids of positive $\alpha$ is the already mentioned class of strict gammoids, and these structures are known to be linear (see e.g. \cite[Corollary~7.75]{aigner}), and thus in particular algebraic. On the other hand, in \cite{hom_matroid} we constructed a simple rank $3$ matroid with strong homogeneity properties with $\wedge$-embeds all the finite simple rank $3$ matroids, and so in particular it is {\em not} algebraic, but that structure has the so-called {\em independence property}, and so it is in a completely different region of the model theoretic universe. In fact, we stress once again that what is interesting about the structure constructed in this paper is the combination of the failure of algebraicity together with the satisfaction of $\omega$-stability, and of the other model theoretic properties of Theorem~\ref{second_main_th}.

	

	
	Concerning the structure of the paper: in Section~\ref{preliminaries} we give a quick introduction to matroid theory and recall the definition of Mason's $\alpha$-function; 
in Section~\ref{constr_sec} we introduce the construction at the core of this paper and prove Theorem~\ref{second_main_th}.

	Finally, we stress that in our construction the fact that we use a modification of the $\alpha$-function (and not the $\alpha$-function per se) is crucial, since the $\delta$ which we define in Section~\ref{constr_sec} is submodular, while the $\alpha$-function is not submodular, cf. Remark~\ref{remark_alpha}.

\section{The Framework}\label{preliminaries}

	For a thorough introduction to matroids see e.g. \cite{rota} or \cite{aigner}. Referring to the formalism of matroid theory which uses as primitive the notion of {\em dependent set}, a simple rank $3$ matroid can be defined as the following combinatorial structure:

\begin{definition}\label{def_matroid_intro} A {\em simple matroid of rank $\leq 3$} is a $3$-hypergraph $(V, R)$ whose adjacency relation is irreflexive, symmetric \mbox{and satisfies the following {\em exchange axiom}:}
	\begin{enumerate}[(Ax)]
	\item if $R(a, b, c)$ and $R(a, b, d)$, then $\{a, b, c, d \}$ is an $R$-clique.
\end{enumerate}
\end{definition}

	\begin{remark}\label{remark_def_matroid} Clearly Definition~\ref{def_matroid_intro} is formally not a complete definition, since it does not specify the dependent sets of cardinality different than $3$. The point is that if $M$ is a simple matroid of rank $\leq 3$, then: \begin{enumerate}[(1)]
	\item every set of size $< 3$ is not dependent;
	\item every set of size $> 3$ is dependent.
\end{enumerate}
Hence, every structure as in Definition~\ref{def_matroid_intro} admits canonically the structure of a matroid of rank $\leq 3$ in the sense of any matroid theory textbook.
\end{remark}

	\begin{notation} Given a simple matroid $M$ we denote by $\mathrm{cl_M}$ the canonically associated closure operator. Thus, we will denote matroids also as $(M, \mathrm{cl_M})$. Further, we will denote as $G(M)$ the geometric lattice canonically associated to $M$, i.e. the collections of closed sets (sets of the form $\mathrm{cl}_M(X) = X$) of $M$ under inclusion.
\end{notation}

	We will also need the following definition (which is used in Fact~\ref{notation_canonical_amalgam}).

	\begin{definition}\label{def_wedge} Let $M = (M, cl)$ and $N = (N, cl)$ be simple matroids. We say that $M$ is a $\wedge$-subgeometry of $N$ if $M$ is a subgeometry of $N$ (i.e. $M \subseteq N$ and $cl_M(X) = cl_N(X) \cap M$) and the inclusion map $\mathit{i}_M : M \rightarrow N$ induces an embedding (with respect to both $\vee$ and $\wedge$) of $G(M)$ into $G(N)$ (cf. \cite[Section 2]{geometric_lattices}).
\end{definition}
	
%
%
	
	
	\begin{notation}\label{notation_flat} Let $M = (M, cl)$ be a simple matroid.
	\begin{enumerate}[(1)]
	\item We refer to closed subsets of $M$ (i.e. subsets $F \subseteq M$ of the form $cl_M(F) = F$) as {\em flats} of $M$, or $M$-flats. 
	\item Given two subsets $F$ and $X$ of $M$ we use the notation\footnote{This notation is taken from \cite{mason} where the notion of $\alpha$-function was introduced.} $F \preccurlyeq X$ (resp. $F \prec X$) to mean that $F$ is a subset of $X$ (resp. a proper subset) and $F$ is a flat of $M$.
\end{enumerate} 
\end{notation}

	\begin{definition}[Mason's $\alpha$-function \cite{mason}]\label{def_alpha_function} Let $M$ be a finite simple matroid. For each subset $X$ of $M$ we define recursively:
	$$\alpha(X) = |X| - rk(X) - \sum_{F \prec X} \alpha(F).$$
\end{definition}

	\begin{definition}\label{nullity} Let $M$ be a finite simple matroid and $F$ an $M$-flat. We define the nullity of $F$ as follows:
	$$\mathbf{n}(F) = |F| - rk(F).$$
\end{definition}

	The following conventions will simplify a great deal the computations of Section~\ref{constr_sec}. Its use will be limited to Proposition \ref{comp_prop} and Lemma \ref{delta_lemma}.

	\begin{convention}\label{convention} Let $M = (M, cl)$ and $N = (N, cl)$ be finite simple matroids and suppose that $M$ is a subgeometry of $N$. If $F$ is an $N$-flat, then:
	\begin{enumerate}[(1)]
	\item we denote by $|F|_M$ the number $|F \cap M|$;
	\item we denote by $\mathbf{n}_M(F)$ the number $\mathbf{n}(F \cap M)$ computed in $M$ as an $M$-flat, and by $\mathbf{n}_N(F)$ the number $\mathbf{n}(F)$ computed in $N$ as an $N$-flat.
\end{enumerate}	 
\end{convention}

	\begin{convention}\label{convention_lines} Let $M = (M, cl)$ be a simple matroid. Then:
	\begin{enumerate}[(1)]
	\item $M$-flats of rank $2$ are referred to as {\em lines}; 
	\item we denote by $L(M)$ the set of lines of $M$.
	\item For $N \subseteq M$ and $\ell \in L(M)$, we say that $\ell$ is based in $N$ if $|\ell \cap N| \geq 2$.
	\item For $N \subseteq M$, we let $L_M(N)$ to be the set of $\ell \in L(M)$ which are based in $N$. Since $L(N)$ and $L_M(N)$ are in canonical bijection we will be sloppy in distinguishing between them, and often write $L(N)$ instead of $L_M(N)$.
\end{enumerate}	
\end{convention}

	The following remark gives an explicit characterization of $\alpha(M)$ in the case $M$ is of rank $3$. For the purposes of the present paper this characterization suffices, and thus we could have avoided the general definition of the $\alpha$-function; we chose not to do so because we wanted to motivate the naturality of the predimension function of Definition~\ref{def_delta} (from Section~\ref{constr_sec}) and make explicit its relation to the $\alpha$-function.

	\begin{remark}\label{null_remark} Let $M$ be a finite simple matroid of rank $3$, then:
	$$\alpha(M) = |M| - 3 - \sum_{\ell \in L(M)} \mathbf{n}(\ell).$$
\end{remark}


\section{The Construction}\label{constr_sec}

	We follow the general framework of \cite{baldwin_generic}, and refer to proofs from there when minor changes to the arguments are needed in order to establish our claims.

	\begin{notation}\label{notation} Let $\mathbf{K}^{*}_0$ be the class of finite simple matroids of rank $\leq 3$ seen as structures in a language with a ternary predicate $R$ for dependent sets of size $3$ (cf. Definition~\ref{def_matroid_intro} and Remark~\ref{remark_def_matroid}). Recall that we refer to elements in $A \in \mathbf{K}^{*}_0$ as {\em planes} (if $rk(A) < 3$ we say that $A$ is degenerate).
\end{notation}

	\begin{convention}\label{conv} Throughout the rest of the paper model theoretically we will consider our planes only in the language of Notation~\ref{notation}. In particular, if $P$ is a plane seen as an $L$-structure, then the lines of $P$ (in the sense of the associated geometric lattice $G(P)$) are not elements of $P$, but only definable subsets of $P$.
\end{convention}
	
	\begin{definition}\label{def_delta} For $A \in \mathbf{K}^{*}_0$, let: 
	$$\delta(A) = |A| - \sum_{\ell \in L(A)} \mathbf{n}(\ell).$$
\end{definition}

	\begin{remark}\label{remark_alpha} Notice that, by Remark \ref{null_remark}, if $A \in \mathbf{K}^{*}_0$ has rank $3$, then:
	$$\delta(A) = \alpha(A) + 3.$$
And so our $\delta$ is just a natural variation of Mason's $\alpha$-function. Despite this, our variation is crucial, since, as we observed in the introduction, the class of finite simple matroids of positive $\alpha$ is the already mentioned class of strict gammoids, and these structures are known to be linear (see e.g. \cite[Corollary~7.75]{aigner}), while, as shown in Example~\ref{non_des_ex}, there exists a non-algebraic $A \in \mathbf{K}^{*}_0$ such that $\delta(A) \geq 0$.
\newline  Even more interestingly, although as a consequence of Lemma~\ref{delta_lemma}, $\delta$ is submodular, the $\alpha$-function is {\em not} submodular, i.e. there exists $A, B \in \mathbf{K}^{*}_0$ such that:
	$$\alpha(A \cup B) > \alpha(A) + \alpha(B) - \alpha(A \cap B).$$
In fact letting $A = \{a, b, c \}$ and $B = \{a, b, d \}$ be two copies of the three element simple matroid of rank $3$ we have that:
	$$\alpha(A \cup B) = 1 > \alpha(A) + \alpha(B) - \alpha(A \cap B) = 0 + 0 + 0.$$
\end{remark}
	

	\begin{example}\label{non_des_ex} Let $A$ be the ``non-Desarguesian'' matroid (cf.~Figure~\ref{figure1}, for another representation of this matroid see \cite[pg. 139]{welsh}). Then, $\delta(A) = 1$, since $A$ has $10$ points and exactly $9$ non-trivial lines, each of nullity $1$ (i.e. each has size $3$). Furthermore, inspection of Figure~\ref{non_des_ex} shows that for every $B \subseteq A$, we have that $\delta(B) \geq 0$. The ``non-Desarguesian'' matroid was shown not to be algebraic in \cite[Corollary, pg.~238]{lind_alg}. This will be relevant for the proof of Theorem~\ref{second_main_th}(1). Finally, notice on the other hand that $\alpha(A) < 0$ (where $\alpha$ is Mason's $\alpha$-function from Def.~\ref{def_alpha_function}), and so the class of planes with positive $\delta$ but negative $\alpha$ is non-trivial, as in fact all the matroids with non-negative $\alpha$ are linear (as they are gammoids).
\begin{figure}[htb]
\begin{center}
\includegraphics[clip, width=0.90\textwidth]{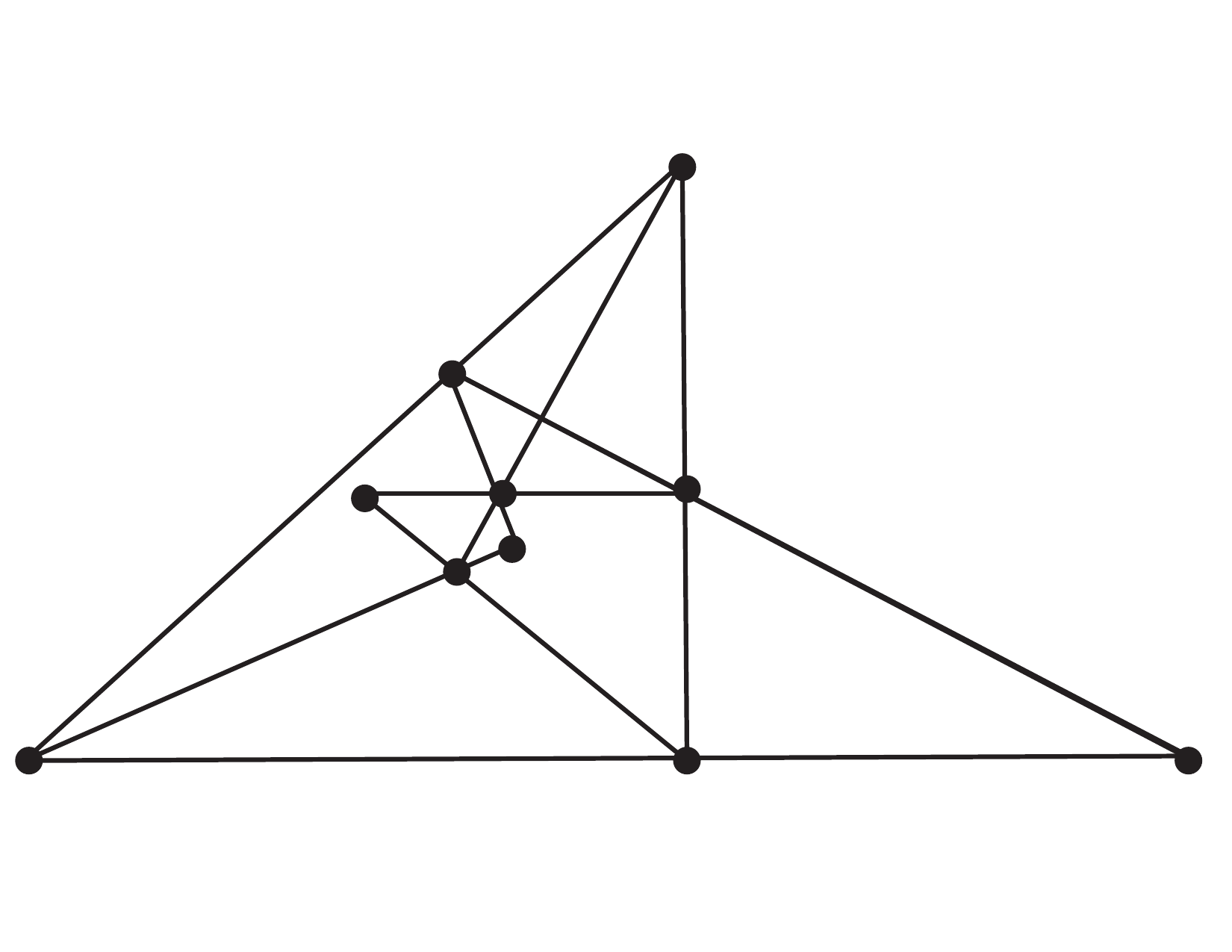}\caption{The ``non-Desarguesian'' matroid.}\label{figure1}
\end{center}
\end{figure}
\end{example}

	The following two claims constitutes the computational core of the paper, and aim at proving that our function $\delta$ is lower semimodular. Proposition \ref{comp_prop} is used to prove Lemma \ref{delta_lemma}, which in turn is used to draw Conclusion \ref{conclusion_ax}. In Proposition \ref{comp_prop} and Lemma \ref{delta_lemma} we will make a crucial \mbox{use of Conventions~\ref{convention}-\ref{convention_lines}.}

	\begin{proposition}\label{comp_prop} Let $A$ and $B$ be disjoint subsets of a matroid $C \in \mathbf{K}^{*}_0$ (so, in particular $A$ and $B$ are submatroids of the matroid $C$, or, equivalently, substructures in the sense of Notation~\ref{notation}). Then:
	\begin{enumerate}[(1)]
	\item if $\ell \in L(B)$, then $\mathbf{n}_{AB}(\ell) - \mathbf{n}_{B}(\ell) = |\ell|_A$ (clearly, if $\ell \in L(B)$, then $\ell \in L(AB)$);
	\item $\delta(A/B) := \delta(AB) - \delta(B)$ is equal to:
	$$|A| - \displaystyle \sum_{\substack{\ell \in L(AB) \\ \ell \in L(A) \\ \ell \not\in L(B)}} \mathbf{n}_{AB}(\ell) - \sum_{\substack{\ell \in L(AB) \\ \ell \in L(A) \\ \ell \in L(B)}} |\ell|_A - \sum_{\substack{\ell \in L(AB) \\ \ell \in L(B) \\ \ell \not\in L(A)}} |\ell|_A.$$
\end{enumerate}
\end{proposition}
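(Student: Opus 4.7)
The plan rests on the simple fact that every line has rank $2$, so in any submatroid containing at least two of its points its nullity is just the number of points it contains minus $2$. For Part $(1)$, if $\ell \in L(B)$ then $\ell$ is a rank-$2$ flat in both $B$ and $AB$, and the convention gives
$$\mathbf{n}_{AB}(\ell) - \mathbf{n}_B(\ell) \;=\; (|\ell|_{AB} - 2) - (|\ell|_B - 2) \;=\; |\ell|_{AB} - |\ell|_B \;=\; |\ell|_A,$$
the last equality using $A \cap B = \emptyset$.

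For Part $(2)$, I start from
$$\delta(AB) - \delta(B) \;=\; |A| \;-\; \sum_{\ell \in L(AB)} \mathbf{n}_{AB}(\ell) \;+\; \sum_{\ell \in L(B)} \mathbf{n}_B(\ell),$$
which follows at once from $|AB| = |A| + |B|$ and the definition of $\delta$. The strategy is to partition $L(AB)$ into four classes according to which of $A$, $B$ the line is based in, and handle each class separately. The key observation is that the class of lines based in \emph{neither} $A$ nor $B$ is degenerate: such a line satisfies $|\ell \cap A| \leq 1$ and $|\ell \cap B| \leq 1$, so $|\ell|_{AB} \leq 2$, forcing $|\ell|_{AB} = 2$ and hence $\mathbf{n}_{AB}(\ell) = 0$. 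These lines therefore contribute nothing to the first sum and nothing to the second.

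For the three remaining classes, I regroup the two sums using the decomposition $L(B) = (L(A) \cap L(B)) \sqcup (L(B) \setminus L(A))$. Lines in $L(A) \setminus L(B)$ appear only in the $L(AB)$ sum and contribute $-\mathbf{n}_{AB}(\ell)$ each; for lines in $L(A) \cap L(B)$ and $L(B) \setminus L(A)$ the two sums pair up and Part $(1)$ converts each pair into $\mathbf{n}_B(\ell) - \mathbf{n}_{AB}(\ell) = -|\ell|_A$, producing the second and third terms of the claimed formula. Together with the leading $|A|$ this gives the displayed identity.

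The only real subtlety is the bookkeeping around the degenerate class of lines based in neither $A$ nor $B$: recognizing that such lines are forced to have exactly two points, and therefore silently drop out of the computation. Once that is in hand, the argument is a mechanical combination of the definition of $\delta$ with Part $(1)$, and no deeper matroid-theoretic input is needed.
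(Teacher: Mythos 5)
Your proof is correct and follows essentially the same route as the paper's: the same telescoping of $\delta(AB)-\delta(B)$, the same four-way partition of $L(AB)$, the same appeal to Part (1) for the two classes of lines based in $B$. Your explicit justification that lines based in neither $A$ nor $B$ have exactly two points and hence nullity zero is the same observation the paper records (more tersely) at the end of its computation.
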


	\begin{proof} Concerning item (1), for $\ell \in L(AB)$ and $\ell \in L(B)$ we have: 
\[ \begin{array}{rcl}
   \mathbf{n}_{AB}(\ell) - \mathbf{n}_{B}(\ell) & = & |\ell|_{AB} - rk(\ell) - |\ell|_B + rk(\ell) \\
				 & = & |\ell|_A + |\ell|_B - |\ell|_B \\
				 & = & |\ell|_A.
\end{array}	\]
Concerning item (2), we have that $\delta(A/B)$ is:
\[ \begin{array}{rcl}
   &  = & \displaystyle |AB| - \sum_{\ell \in L(AB)} \mathbf{n}_{AB}(\ell) - |B| + \sum_{\ell \in L(B)} \mathbf{n}_B(\ell) \\
   &  = & \displaystyle |A| +|B| - \sum_{\ell \in L(AB)} \mathbf{n}_{AB}(\ell) - |B| + \sum_{\ell \in L(B)} \mathbf{n}_B(\ell) \\
   &  = & \displaystyle |A| - \sum_{\ell \in L(AB)} \mathbf{n}_{AB}(\ell) + \sum_{\ell \in L(B)} \mathbf{n}_B(\ell) \\
   &  = & |A| - \displaystyle \sum_{\substack{\ell \in L(AB) \\ \ell \in L(A) \\ \ell \not\in L(B)}} \mathbf{n}_{AB}(\ell) - \sum_{\substack{\ell \in L(AB) \\ \ell \in L(A) \\ \ell \in L(B)}} \mathbf{n}_{AB}(\ell) - \sum_{\substack{\ell \in L(AB) \\ \ell \in L(B) \\ \ell \not\in L(A)}} \mathbf{n}_{AB}(\ell) + \displaystyle \sum_{\ell \in L(B)} \mathbf{n}_{B}(\ell) \\
				 & = & |A| - \displaystyle \sum_{\substack{\ell \in L(AB) \\ \ell \in L(A) \\ \ell \not\in L(B)}} \mathbf{n}_{AB}(\ell) - \sum_{\substack{\ell \in L(AB) \\ \ell \in L(A) \\ \ell \in L(B)}} (\mathbf{n}_{AB}(\ell) - \mathbf{n}_{B}(\ell)) - \sum_{\substack{\ell \in L(AB) \\ \ell \in L(B) \\ \ell \not\in L(A)}} (\mathbf{n}_{AB}(\ell) - \mathbf{n}_{B}(\ell)) \\
				 & = & |A| - \displaystyle \sum_{\substack{\ell \in L(AB) \\ \ell \in L(A) \\ \ell \not\in L(B)}} \mathbf{n}_{AB}(\ell) - \sum_{\substack{\ell \in L(AB) \\ \ell \in L(A) \\ \ell \in L(B)}} |\ell|_A - \sum_{\substack{\ell \in L(AB) \\ \ell \in L(B) \\ \ell \not\in L(A)}} |\ell|_A.
\end{array}	\]
Concerning the passage from the third equation to the fourth equation notice that if $\ell \in L(AB) - (L(A) \cup L(B))$, then $\mathbf{n}_{AB}(\ell) = 0$.	
\end{proof}

	\begin{lemma}\label{delta_lemma} Let $A, B, C \subseteq D \in \mathbf{K}^{*}_0$, with $A \cap C = \emptyset$ and $B \subseteq C$. Then: 
	$$\delta(A/B) \geq \delta(A/C).$$
\end{lemma}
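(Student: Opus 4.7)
My plan is to apply Proposition~\ref{comp_prop}(2) to both pairs $(A,B)$ and $(A,C)$, treated as disjoint subsets of the ambient $D$, writing $\delta(A/B) = |A| - \Sigma_B$ and $\delta(A/C) = |A| - \Sigma_C$, where $\Sigma_X$ is the sum of the three line-contributions in that formula. Since the $|A|$'s cancel, it suffices to prove $\Sigma_C \geq \Sigma_B$, and I intend to do this line-by-line: for each line $\ell$ of $D$ I will compare its contribution to $\Sigma_B$ with its contribution to $\Sigma_C$ and show the latter dominates.

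The preliminary facts I will use are that $B \subseteq C$ yields both $L(B) \subseteq L(C)$ and $|\ell|_B \leq |\ell|_C$ for every line $\ell$; that every line has $rk(\ell)=2$, so for $\ell \in L(A)$ and $X$ disjoint from $A$ one has $\mathbf{n}_{AX}(\ell) = |\ell|_A + |\ell|_X - 2$ by Proposition~\ref{comp_prop}(1); and that $\ell \notin L(B)$ forces $|\ell|_B \leq 1$. I then split by where $\ell$ sits relative to $L(A)$ and to $L(B) \subseteq L(C)$. Lines in $L(B)$ contribute $|\ell|_A$ on both sides and cancel; lines in $L(A)\setminus L(C)$ contribute $\mathbf{n}_{AB}(\ell)$ versus $\mathbf{n}_{AC}(\ell)$, where the inequality $|\ell|_B \leq |\ell|_C$ does the job; lines in $(L(C)\setminus L(B))\setminus L(A)$ contribute $0$ versus $|\ell|_A \geq 0$. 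The only case requiring real care is $\ell \in L(A) \cap (L(C)\setminus L(B))$: here $\Sigma_B$ picks up the nullity $\mathbf{n}_{AB}(\ell)=|\ell|_A+|\ell|_B-2 \leq |\ell|_A - 1$ (using $|\ell|_B \leq 1$), while $\Sigma_C$ picks up $|\ell|_A$, so the $C$-side strictly dominates. Lines outside $L(A)\cup L(C)$ contribute zero on both sides.

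The main obstacle is not technical but organizational: I must verify that the three sums of Proposition~\ref{comp_prop}(2), together with the implicit zero contribution of lines in $L(AX)\setminus(L(A)\cup L(X))$ (the justification of the last step in the proof of that proposition), really partition all possibilities so that the case analysis above is both exhaustive and non-overlapping. Once this is accounted for, every case yields a nonnegative difference (the $C$-contribution minus the $B$-contribution), and summing over lines gives $\Sigma_C \geq \Sigma_B$ and hence $\delta(A/B) \geq \delta(A/C)$. The conceptual content of the bookkeeping is that enlarging the base from $B$ to $C$ can only (a) promote certain $L(A)$-lines from the nullity bucket to the $|\ell|_A$ bucket, which is never a decrease since the old $|\ell|_B$ was at most $1$, or (b) introduce brand-new $L(C)$-based lines carrying an extra nonnegative $|\ell|_A$ penalty.
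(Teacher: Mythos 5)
Your proposal is correct and follows essentially the same route as the paper: both expand $\delta(A/B)$ and $\delta(A/C)$ via Proposition~\ref{comp_prop}(2) and compare contributions line by line according to membership in $L(A)$, $L(B)\subseteq L(C)$, with the decisive case $\ell\in L(A)\cap(L(C)\setminus L(B))$ handled by the same estimate $\mathbf{n}_{AB}(\ell)=|\ell|_A+|\ell|_B-2\leq|\ell|_A-1<|\ell|_A$. The only cosmetic difference is that you account for all lines of $D$, whereas the paper only needs to dominate the terms appearing in the expression for $\delta(A/B)$, since any unmatched terms on the $C$-side are nonnegative.
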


	\begin{proof} Let $A, B, C$ be subsets of a matroid $D$ and suppose that $B \subseteq C$ and $A \cap C = \emptyset$. Notice that by Proposition \ref{comp_prop} we have:
\begin{equation}\label{equation1}\tag{$\star_1$}
- \delta(A/C) = - |A| + \displaystyle \sum_{\substack{\ell \in L(AC) \\ \ell \in L(A) \\ \ell \not\in L(C)}} \mathbf{n}_{AC}(\ell) + \sum_{\substack{\ell \in L(AC) \\ \ell \in L(A) \\ \ell \in L(C)}} |\ell|_A + \sum_{\substack{\ell \in L(AC) \\ \ell \in L(C) \\ \ell \not\in L(A)}} |\ell|_A,
\end{equation} 
\begin{equation}\label{equation2}\tag{$\star_2$}
\delta(A/B) = |A| - \displaystyle \sum_{\substack{\ell \in L(AB) \\ \ell \in L(A) \\ \ell \not\in L(B)}} \mathbf{n}_{AB}(\ell) - \sum_{\substack{\ell \in L(AB) \\ \ell \in L(A) \\ \ell \in L(B)}} |\ell|_A - \sum_{\substack{\ell \in L(AB) \\ \ell \in L(B) \\ \ell \not\in L(A)}} |\ell|_A.
\end{equation} 
Notice now that for $\ell \in L(AC)$ we have:
	\begin{enumerate}[(a)]
	\item if $\ell \in L(A)$, $\ell \notin L(B)$ and $\ell \notin L(C)$, then $\ell$ occurs in the first sum of (\ref{equation1}) and in the first sum of (\ref{equation2}), and clearly $\mathbf{n}_{AC}(\ell) \geq \mathbf{n}_{AB}(\ell)$;
	\item if $\ell \in L(A)$ and $\ell \in L(B)$, then $\ell \in L(A)$ and $\ell \in L(C)$, and so $\ell$ occurs in the second sum of (\ref{equation1}) and in the second sum of (\ref{equation2});
	\item if $\ell \in L(B)$ and $\ell \notin L(A)$, then $\ell \in L(C)$ and $\ell \notin L(A)$, and so $\ell$ occurs in the third sum of (\ref{equation1}) and in the third sum of (\ref{equation2});
	\item if $\ell \in L(A)$, $\ell \notin L(B)$ and $\ell \in L(C)$, then $\ell$ occurs in the second sum of (\ref{equation1}) and in the first sum of (\ref{equation2}), and furthermore we have: 
	$$\mathbf{n}_{AB}(\ell) \leq \mathbf{n}_{A}(\ell) + 1 < \mathbf{n}_{A}(\ell) + 2 = |\ell|_A.$$
\end{enumerate}
Since, clauses (a)-(d) above cover all the terms occurring in (\ref{equation2}), we conclude that $\delta(A/B) \geq \delta(A/C)$, as wanted.
\end{proof}


	\begin{definition} Let:
	$$\mathbf{K}_0 = \{ A \in \mathbf{K}^{*}_0 \text{ such that for any } A' \subseteq A,\delta(A') \geq 0\}, $$
and $(\mathbf{K}_0, \leq)$ be as in \cite[Definition 3.11]{baldwin_generic}, i.e. we let $A \leq B$ if and only if:
	$$A \subseteq B \wedge \forall X (A \subseteq X \subseteq B \Rightarrow \delta(X) \geq \delta(A)).$$
Finally, we write $A < B$ to mean that $A \leq B$ and $A$ is a proper subset of $B$.
\end{definition}

	\begin{conclusion}\label{conclusion_ax} $(\mathbf{K}_0, \leq)$ satisfies Axiom A1-A6 from \cite[Axioms Group A]{baldwin_generic}, i.e.:
\begin{enumerate}[(1)]
	\item if $A \in \mathbf{K}_0$, then $A \leq A$;
	\item if $A \leq B$, then $A \subseteq B$;
	\item if $A, B, C \in \mathbf{K}_0$ and $A \leq B \leq C$, then $A \leq C$;
	\item if $A, B, C \in \mathbf{K}_0$, $A \leq C$, $B \subseteq C$, and $A \subseteq B$, then $A \leq B$;
	\item $\emptyset \in \mathbf{K}_0$ and $\emptyset \leq A$, for all $A \in \mathbf{K}_0$;
	\item if $A, B, C \in \mathbf{K}_0$, $A \leq B$, and $C$ is a substructure of $B$, then $A \cap C \leq C$.
\end{enumerate}
\end{conclusion}

	\begin{proof} As in e.g. \cite[Theorem 3.12]{baldwin_generic}, this is easy to establish using Lemma \ref{delta_lemma}.
\end{proof}

	\begin{fact}[{\cite[Theorem 4.2]{geometric_lattices}}]\label{notation_canonical_amalgam} Let $A, B, C \in \mathbf{K}_0$ with $C$ a $\wedge$-subgeometry (cf. Definition \ref{def_wedge}) of $A$ and $B$ and $A \cap B = C$. Then there exists a canonical amalgam of $A$ and $B$ over $C$, which we denote as $A \oplus_C B$. In the next remark we give an explicit characterization of $A \oplus_C B$ as an $L$-structure, i.e.  we simply translate the lattice theoretic definition of $A \oplus_C B$ from \cite{geometric_lattices} into the language of $L$-structures.
\end{fact}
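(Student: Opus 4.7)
The plan is to construct $A \oplus_C B$ explicitly as a simple matroid of rank $\leq 3$ on the underlying set $A \cup B$ (well-defined since $A \cap B = C$), and then verify that it satisfies the universal property characterizing a canonical amalgam in $\mathbf{K}_0$. In the framework of Convention~\ref{conv}, specifying such a structure amounts to specifying the ternary relation $R$, or equivalently, the collection of lines. The natural choice is the freest one: $L(A \oplus_C B) := L(A) \cup L(B)$, where the overlap $L(A) \cap L(B)$ is identified with $L(C)$ via the $\wedge$-subgeometry hypothesis, and any pair $\{a,b\}$ with $a \in A \setminus C$, $b \in B \setminus C$ becomes a trivial $2$-point line. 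One then declares $R(x,y,z)$ to hold exactly when $\{x,y,z\}$ is contained in one of the lines of $L(A) \cup L(B)$ that has size $\geq 3$.

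The first step is to check that this defines an object of $\mathbf{K}^{*}_0$, i.e.\ that the exchange axiom (Ax) holds. The only nontrivial case is when one is given two distinct lines through a common pair of points; here the $\wedge$-subgeometry hypothesis is doing the crucial work. If $\ell_1 \in L(A)$ and $\ell_2 \in L(B)$ both contain two common points, then these points must lie in $\ell_1 \cap \ell_2$, and since the embeddings $G(C) \hookrightarrow G(A)$ and $G(C) \hookrightarrow G(B)$ preserve $\wedge$, the intersection is controlled by a common flat of $C$; having two common points forces the common flat to have rank $2$, whence $\ell_1 = \ell_2 \in L(C)$ and both lines coincide. The cases in which one of the lines is trivial (coming from an $A$--$B$ pair) are immediate from the construction, since such trivial lines cannot share two points with any other line of the structure.

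Next, I would verify $A \oplus_C B \in \mathbf{K}_0$, which is the most substantive computational step. Let $X \subseteq A \oplus_C B$ and set $X_A = X \cap A$, $X_B = X \cap B$, $X_C = X \cap C$; then $X = X_A \cup X_B$ and $X_A \cap X_B = X_C$. By construction no "new" line of nullity $\geq 1$ has been added: every line of $X$ of size $\geq 3$ is either a line of $X_A$ or a line of $X_B$. Writing $\delta(X)$ as $|X_A| + |X_B| - |X_C|$ minus the nullity sum, one can split the line sum according to whether $\ell \in L(X_A)$, $\ell \in L(X_B)$, or $\ell \in L(X_C)$, and the inclusion-exclusion yields $\delta(X) = \delta(X_A) + \delta(X_B) - \delta(X_C)$. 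Applying Lemma~\ref{delta_lemma} with $B := X_C \subseteq X_B$ and $A := X_A$ (noting $X_A \cap X_B = X_C$) gives $\delta(X_A / X_C) \geq \delta(X_A / X_B)$, which is equivalent to the same identity and yields $\delta(X) \geq \delta(X_A) \geq 0$ using $A \in \mathbf{K}_0$.

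Finally, canonicity would be established by verifying the appropriate universal property: namely, that $A$ and $B$ are $\wedge$-subgeometries of $A \oplus_C B$ and that any simple matroid $D \in \mathbf{K}^{*}_0$ containing $A$ and $B$ as $\wedge$-subgeometries with $A \cap B = C$ admits a $\wedge$-embedding of $A \oplus_C B$ into $D$. This is immediate from the construction, since every dependency in $A \oplus_C B$ is forced to hold in any such $D$, so the identity map is a $\wedge$-embedding. The main obstacle I anticipate is not the existence itself but the clean verification of the exchange axiom in the one borderline case above; once that is dispatched, everything else is bookkeeping using the $\wedge$-preservation hypothesis and Lemma~\ref{delta_lemma}.
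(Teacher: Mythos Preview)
The paper does not prove this statement: it is recorded as a Fact imported from \cite[Theorem~4.2]{geometric_lattices}, and the only content supplied here is the translation into the language of $L$-structures in Remark~\ref{remark_canonical_amalgam}. Your construction of the amalgam, your exchange-axiom check, and your universal-property argument (your steps 1, 2 and 4) are in line with that Remark and are essentially what a direct proof of the cited result requires.

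Your step~3, however, is both extraneous and incorrect. It is extraneous because the Fact only asserts existence of the amalgam as a plane; membership in $\mathbf{K}_0$ is not part of the statement and is handled later (Lemma~\ref{amalgamation}(2)) under the stronger hypothesis $C \leq A, B$, not merely $C$ a $\wedge$-subgeometry. It is incorrect because the identity $\delta(X) = \delta(X_A) + \delta(X_B) - \delta(X_C)$ fails for general $X \subseteq A \oplus_C B$. Take $C = \{c_1,c_2\}$, $A = \{c_1,c_2,a_1,a_2\}$, $B = \{c_1,c_2,b_1,b_2\}$ with all six points on a single line, and set $X = \{a_1,a_2,b_1,b_2\}$: then $\delta(X)=2$ while $\delta(X_A)+\delta(X_B)-\delta(X_C)=2+2-0=4$. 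The merged line meets $X$ in four points but meets each of $X_A$, $X_B$ in only two, so your premise ``every line of $X$ of size $\geq 3$ is either a line of $X_A$ or a line of $X_B$'' is false. Finally, once the disjointness hypothesis is arranged, Lemma~\ref{delta_lemma} yields $\delta(X) \leq \delta(X_A)+\delta(X_B)-\delta(X_C)$, the opposite inequality from the one you would need, so the deduction ``$\delta(X) \geq \delta(X_A)$'' does not follow. Drop step~3; the remaining steps suffice for what the Fact actually claims.
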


\begin{remark}\label{remark_canonical_amalgam} 
The amalgam $D := A \oplus_{C} B$ of Fact \ref{notation_canonical_amalgam} can be characterized as the following $L$-structure:
\begin{enumerate}[(1)]
	\item the domain of $D$ is $A \cup B$;
	\item $R^{D} = R^{A} \cup R^{B} \cup \{ \{ a, b, c \} : a \vee b \vee c = a' \vee b' \text{ and } \{ a', b' \} \subseteq C \}$.
\end{enumerate} 
Where $\vee$ refers to the canonically associated geometric lattice $G(D)$. A more transparent way to define the amalgam $A \oplus_{C} B$ is by defining the domain of $A \oplus_{C} B$ to be simply $A \cup B$, and the lines of $A \oplus_{C} B$ to be the lines coming from $A$, those coming from $B$, modulo identifying the lines from $C$, plus the obvious trivial lines. 
\end{remark}

	\begin{lemma}\label{amalgamation} 
	\begin{enumerate}[(1)]
	\item If $A \leq B \in \mathbf{K}_0$, then $A$ is a $\wedge$-subgeometry of $B$.
	\item $(\mathbf{K}_0, \leq)$ has the amalgamation property.
	\end{enumerate}
\end{lemma}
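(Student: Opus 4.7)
The plan is to prove (1) by a case analysis on pairs of flats in the rank-$\leq 3$ lattice $G(A)$, using a $\delta$-computation together with Lemma~\ref{delta_lemma} to kill the one delicate case, and then to deduce (2) by forming the canonical amalgam $D = A \oplus_C B$ of Fact~\ref{notation_canonical_amalgam} and verifying $A \leq D$ and $B \leq D$ directly via the line structure described in Remark~\ref{remark_canonical_amalgam}.

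For (1), I would first observe that since $A \leq B$ in particular makes $A$ a substructure of $B$ in the language with $R$, and a rank-$\leq 3$ simple matroid is determined by its ternary dependence (cf.\ Remark~\ref{remark_def_matroid}), the ranks agree on subsets of $A$ and hence $\mathrm{cl}_A(X) = \mathrm{cl}_B(X) \cap A$ for every $X \subseteq A$; so $A$ is a subgeometry of $B$. For the $\wedge$-preservation, meets in a rank-$\leq 3$ geometric lattice are set-theoretic intersections, so only two distinct $A$-lines $\ell_1,\ell_2$ require work. A substructure argument on a well-chosen triple (two points from one line, one from the other) shows $\mathrm{cl}_B(\ell_1) \neq \mathrm{cl}_B(\ell_2)$, so their $B$-meet has at most one point; when $\ell_1 \cap \ell_2 = \{p\}$ this forces the $B$-meet to be $\{p\}$. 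The remaining case, $\ell_1 \cap \ell_2 = \emptyset$ in $A$ yet some $p \in \mathrm{cl}_B(\ell_1) \cap \mathrm{cl}_B(\ell_2)$ (necessarily in $B \setminus A$ by the subgeometry condition), I would kill by setting $Z := \ell_1 \cup \ell_2 \cup \{p\}$: the only non-trivial lines of $Z$ are $\ell_1 \cup \{p\}$ and $\ell_2 \cup \{p\}$ (any other collinear triple in $Z$ would, again by subgeometry, force $\ell_1$ and $\ell_2$ to meet in $A$), and a direct count gives $\delta(Z) = 3$ against $\delta(\ell_1 \cup \ell_2) = 4$; Lemma~\ref{delta_lemma} applied with $\ell_1 \cup \ell_2 \subseteq A$ then produces $\delta(\{p\}/A) \leq -1$, contradicting $A \leq B$.

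For (2), given $C \leq A$ and $C \leq B$, by (1), $C$ is a $\wedge$-subgeometry of both, so Fact~\ref{notation_canonical_amalgam} yields $D := A \oplus_C B \in \mathbf{K}^{*}_0$, with lines as described in Remark~\ref{remark_canonical_amalgam}. The main step is $A \leq D$: for $A \subseteq X \subseteq D$, set $X_B := X \cap B$ and note $X \cap C = C$ since $C \subseteq A \subseteq X$. Reading off the lines of $X$ from Remark~\ref{remark_canonical_amalgam} --- each is an $A$-line, a $B$-line restricted to $X_B$, a $C$-line extended to $\ell^A \cup (\ell^B \cap X_B)$, or a trivial mixed $2$-point pair --- a direct inclusion-exclusion on $|X|$ and on $\sum \mathbf{n}(\ell)$ yields the identity $\delta(X) = \delta(A) + \delta(X_B) - \delta(C)$; the cancellation is exact precisely because $X_C = C$ forces every $C$-line to sit fully inside $X_C$, so no boundary case arises. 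Since $C \leq B$ gives $\delta(X_B) \geq \delta(C)$, we conclude $\delta(X) \geq \delta(A)$; $B \leq D$ is symmetric. Finally, to verify $\delta(Y) \geq 0$ for every $Y \subseteq D$ (so that $D \in \mathbf{K}_0$), set $Y_A := Y \cap A$, $Y_B := Y \cap B$ and apply Lemma~\ref{delta_lemma} to the triple $(Y_B \setminus Y_A,\, Y_A,\, A)$ --- legal as $Y_A \subseteq A$ and $(Y_B \setminus Y_A) \cap A = \emptyset$ --- to get $\delta(Y) - \delta(Y_A) \geq \delta(A \cup Y_B) - \delta(A) \geq 0$, the last inequality coming from the already-established $A \leq D$; hence $\delta(Y) \geq \delta(Y_A) \geq 0$ since $Y_A \subseteq A \in \mathbf{K}_0$.

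The main technical hurdle will be the $\delta$-identity in the canonical amalgam: the line-by-line inclusion-exclusion closes up exactly only under the hypothesis $A \subseteq X$, which via $X_C = C$ guarantees that each $C$-line's overlap in the two halves of $D$ is already counted in $C$. Without this hypothesis, an extra non-negative ``mixing correction'' appears, and one only obtains the submodular estimate $\delta(X) \leq \delta(X_A) + \delta(X_B) - \delta(X_C)$, which points the wrong way for a strong embedding. It is precisely this cancellation that makes Part~(1) the key enabler of Part~(2).
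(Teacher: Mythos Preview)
Your proof is correct and follows the same route as the paper's: for (1), a point of $B \setminus A$ lying on two distinct $A$-lines forces $\delta(Ap) < \delta(A)$, contradicting $A \leq B$; for (2), one forms the canonical amalgam $D = A \oplus_C B$ and exploits the additive behaviour of $\delta$ along the line decomposition of Remark~\ref{remark_canonical_amalgam}. Your treatment of (2) is in fact more complete than the paper's, which records only the single identity $\delta(D) = \delta(A) + \delta(B) - \delta(C)$ and declares ``thus clearly $D \in \mathbf{K}_0$ and $A, B \leq D$'', whereas you spell out the intermediate identity $\delta(X) = \delta(A) + \delta(X_B) - \delta(C)$ for $A \subseteq X \subseteq D$ and the submodularity step for arbitrary $Y \subseteq D$.
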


	\begin{proof} Concerning (1), suppose that $A,  B \in \mathbf{K}_0$, and $A$ is not a $\wedge$-subgeometry of $B$, then there exists $p \in B - A$ and $\ell_1 \neq \ell_2 \in L(A)$ such that $p$ is incident with both $\ell_1$ and $\ell_2$. Thus, $\delta(Ap) < \delta(A)$ and so $A \not\leq B$. Concerning (2), let $A, B, C \in \mathbf{K}_0$ and suppose that $C \leq A, B$ with $A \cap B = C$ (without loss of generality). Let $A \oplus_C B := D$ (recall Notation \ref{notation_canonical_amalgam}), which exists by (1). Using e.g. Remark \ref{remark_canonical_amalgam}, it is easy to see that:
\begin{equation}\tag{$\star_3$}
\delta(D) = \delta(A) + \delta(B) - \delta(C).
\end{equation}
Furthermore, for every $C \subseteq X \subseteq D$ we have that $X = (A \cap X) \oplus_{C \cap X} (B \cap X)$. Thus, it is immediate to infer that $D \in \mathbf{K}_0$ and $B, C \leq D$, as wanted.
\end{proof}

	We now introduce several technical notions of amalgamation, in particular sharp and uniform amalgamation. We are only interested in sharp amalgamation as a sufficient condition for uniform amalgamation, and we are only interested in the latter as a sufficient condition for $\omega$-stability, see Conclusion~\ref{conclusion_stability}.

	\begin{definition}\label{various_def} Let $(\mathbf{L}_0, \leq)$ be a class of relational structures of the same vocabulary satisfying the conditions in Conclusion~\ref{conclusion_ax} and $A, B, C \in \mathbf{L}_0$.
	\begin{enumerate}[(1)]
	\item For $k < \omega$, we say that $A$ is $k$-strong in $B$, denoted $A \leq^k B$, if for any $B'$ with $A \subseteq B' \subseteq B$ and $|B' - A| \leq k$ we have $A \leq B'$ (cf. \cite[Definition 2.26]{baldwin_generic}).
	\item We say that $B$ is a primitive extension of $A$ if $A \leq B$ and there is no $A \subsetneq B_0 \subsetneq B$ such that $A \leq B_0 \leq B$ (cf. \cite[Definition 2.30]{baldwin_generic}).
	\item Given $C \leq A, B$ with $A \cap B = C$, we let $A \otimes_C B$ denote the free amalgam of $A$ and $B$ over $C$, i.e. the structure with domain $A \cup B$ and no additional relations apart from the ones in $A$ and the ones in $B$.
	\item We say that $(\mathbf{L}_0, \leq)$ has the sharp amalgamation property if for every $A, B, C \in \mathbf{L}_0$, if $C \leq A$ is primitive and $C \leq^{|A| - |C|} B$, then either $A \otimes_C B \in \mathbf{L}_0$ or there is a $\leq$-embedding of $A$ into $B$ over $C$ (cf. \cite[Definition 2.31]{baldwin_generic}).
	\item\label{unif_amal} We say that $(\mathbf{L}_0, \leq)$ has the uniform amalgamation property if the following
condition holds: for every $A \leq B \in \mathbf{L}_0$, and for every $m < \omega$ there is an $n = f_B(m)$
such that if $A \leq^n C$, then there is a $D$, a strong embedding of $C$ into $D$ and an $m$-strong
embedding of $B$ into $D$ that completes a commutative diagram with the given embeddings of $A$ into $B$ and $C$. 
	\end{enumerate}
\end{definition}

	\begin{proposition}\label{prop_princ} Let $A \leq B \in \mathbf{K}_0$ be primitive. Then either $|B-A| \leq 1$, or for every $p \in B-A$ we have that $p$ is not incident with a line $\ell \in L(A)$. Furthermore, in the first case we have that $\delta(B/A) \leq 1$. 
\end{proposition}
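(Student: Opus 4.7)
The plan is to argue by contradiction, exploiting the primitivity hypothesis. I will first derive a convenient formula for $\delta(p/A)$ when a single point $p$ is added to $A$, then show that if some $p \in B-A$ is incident with a line of $A$ then $Ap$ is a strict intermediate strong extension, contradicting primitivity when $|B-A| \geq 2$.

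Concretely, I first apply Proposition~\ref{comp_prop}(2) with the role of $A$ played by $\{p\}$ and the role of $B$ played by $A$. Because $L(\{p\}) = \emptyset$, the first two sums in the formula vanish, and the third collapses to $|\{\ell \in L(A) : p \in \ell\}|$. Hence
\[
\delta(p/A) \;=\; 1 \;-\; \big|\{\ell \in L(A) : p \in \ell\}\big|.
\]
In particular $\delta(p/A) \leq 1$, which already settles the ``Furthermore'' clause: if $|B-A| \leq 1$, then $\delta(B/A)$ is either $0$ (if $B = A$) or $\delta(p/A) \leq 1$ (if $B = A \cup \{p\}$).

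For the main dichotomy, assume toward contradiction that $|B-A| \geq 2$ and that some $p \in B-A$ lies on a line $\ell \in L(A)$. Then by the displayed formula, $\delta(p/A) \leq 0$. On the other hand, from $A \leq B$ applied to the intermediate set $Ap$ we get $\delta(Ap) \geq \delta(A)$, i.e.\ $\delta(p/A) \geq 0$. Combining, $\delta(p/A) = 0$, and so $\delta(Ap) = \delta(A)$ (and $p$ lies on exactly one line of $L(A)$).

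It remains to verify that $A \subsetneq Ap \subsetneq B$ is a witness against primitivity. The strict inclusions are clear: $p \notin A$ gives $A \subsetneq Ap$, and $|B - A| \geq 2$ gives $Ap \subsetneq B$. The set $Ap$ lies in $\mathbf{K}_0$ because it is a subset of $B \in \mathbf{K}_0$. The relation $A \leq Ap$ follows from $\delta(Ap) = \delta(A)$. Finally, for $Ap \leq B$, take any $X$ with $Ap \subseteq X \subseteq B$: since $A \subseteq X \subseteq B$ and $A \leq B$, we have $\delta(X) \geq \delta(A) = \delta(Ap)$. Thus $A \leq Ap \leq B$ properly factors the extension, contradicting that $A \leq B$ is primitive. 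The only real subtlety here is ensuring both $A \leq Ap$ \emph{and} $Ap \leq B$ simultaneously, which is exactly why the forced equality $\delta(p/A)=0$ (rather than a mere inequality) is the crux of the argument.
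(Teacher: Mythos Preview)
Your proof is correct and follows essentially the same line as the paper's: both pick $p \in B-A$ incident with a line of $A$, deduce $\delta(Ap)=\delta(A)$, and conclude $A < Ap < B$, contradicting primitivity. The only cosmetic difference is that the paper obtains $\delta(p/A)=0$ directly by invoking Lemma~\ref{amalgamation} (the $\wedge$-subgeometry property forces $p$ onto at most one line of $A$), whereas you squeeze it between $\delta(p/A)\leq 0$ and $\delta(p/A)\geq 0$ using $A\leq B$; your argument is slightly more self-contained but otherwise identical in structure.
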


	\begin{proof} Suppose that there exists $p \in B-A$ such that $p$ is incident with a line $\ell \in L(A)$ (and thus under no other line $\ell' \in L(A)$, cf. Lemma \ref{amalgamation}). Then we have $\delta(A) = \delta(Ap)$, and so if $|B| - |A| > 1$ we have $\delta(A) = \delta(Ap) \leq \delta(B)$, and thus $A < Ap < B$, contradicting the assumptions of the proposition. The furthermore part is immediate from the definition of the function $\delta$.
\end{proof}

	\begin{lemma}\label{sharp_amalgamation} \begin{enumerate}[(1)]
	\item\label{sharp} $(\mathbf{K}_0, \leq)$ has the sharp amalgamation property.
	\item\label{sharp_improv} In (\ref{sharp}) we can replace $|A|-|C|$ with $1$, i.e. the conclusion of Definition~\ref{various_def}(4) is true for the all the extensions of the form $C \leq^{1} B$, not only for the extensions of the form $C \leq^{|A| - |C|} B$, as required by Definition~\ref{various_def}(4)).
	\item\label{item_4} $(\mathbf{K}_0, \leq)$ has the uniform amalgamation property (cf. Definition~\ref{various_def}(\ref{unif_amal})).
	\end{enumerate}
\end{lemma}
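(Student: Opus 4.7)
The plan is to prove parts (1), (2), (3) in order, using the dichotomy from Proposition~\ref{prop_princ} as the central organizing device throughout.

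For (1), I would fix primitive $C \leq A$ in $\mathbf{K}_0$ and $C \leq^{|A-C|} B$, and case-split by Proposition~\ref{prop_princ}: either $|A-C|=1$, or $|A-C|\geq 2$ and no point of $A-C$ lies on any line of $L(C)$. In the one-point case $A=Cp$, the point $p$ lies on at most one line $\ell\in L(C)$ by Lemma~\ref{amalgamation}(1). If such $\ell$ exists and is extended by some $b\in B-C$, then $p\mapsto b$ is a candidate $\leq$-embedding whose strongness is checked against $C\leq^1 B$. If $\ell$ exists but is not extended in $B$, the free amalgam $A\otimes_C B$ preserves the exchange axiom and keeps rank $\leq 3$ (since $p\in\mathrm{cl}(\ell)\subseteq\mathrm{cl}(C)$), and Proposition~\ref{comp_prop} combined with $\delta(p/C)=0$ yields $\delta$-positivity on all subsets. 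The subcase where $p$ lies on no line of $L(C)$ is handled dually, using whether $B\subseteq\mathrm{cl}(C)$ to decide between an embedding and the free amalgam. The ``free'' case $|A-C|\geq 2$ proceeds by the same strategy applied point-by-point to $A-C$: the exchange axiom for the free amalgam is automatic from the case hypothesis, and the only obstructions are rank and the matching of lines inside $A-C$ with configurations in $B$.

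For (2), I would revisit each subcase of the argument in (1) and verify that every embedding we construct uses only single-point strongness of $C$ in $B$; hence $C\leq^1 B$ suffices in place of $C\leq^{|A-C|} B$. For (3), I would apply the standard derivation of uniform amalgamation from sharp amalgamation (cf.~\cite[Section 3]{baldwin_generic}): factor $A\leq B$ into a chain of primitive extensions $A=B_0\leq B_1\leq\cdots\leq B_s=B$, iterate parts (1)-(2) along the chain, and define the required function $f_B(m)$ recursively from the witnesses at each step.

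The main obstacle I expect is verifying, in the multi-point (``free'') case of (1), that the candidate embedding of $A$ into $B$ is genuinely a $\leq$-embedding: the matroidal embedding over $C$ is essentially forced by the case hypothesis, but upgrading it to a $\leq$-embedding requires extracting enough strongness data from $C\leq^{|A-C|} B$, which is the most delicate interplay of the $\delta$-strongness condition with the rank-$\leq 3$ matroidal constraint in the proof.
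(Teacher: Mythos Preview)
Your case-split via Proposition~\ref{prop_princ} and your treatment of item~(\ref{item_4}) match the paper's. However, you are overcomplicating the ``free'' subcases and, as a result, misidentifying where the difficulty lies.

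The observation you are missing is that whenever no point of $A-C$ lies on a line of $L(C)$ that is also extended by some point of $B-C$, the free amalgam $A\otimes_C B$ and the canonical amalgam $A\oplus_C B$ of Remark~\ref{remark_canonical_amalgam} \emph{coincide as $L$-structures}. Indeed, the only $R$-triples added in $A\oplus_C B$ beyond $R^A\cup R^B$ are triples lying on a line based in $C$; any such triple containing both a point of $A-C$ and a point of $B-C$ forces that point of $A-C$ onto a line of $L(C)$ extended in $B$. Once $A\otimes_C B=A\oplus_C B$, membership in $\mathbf{K}_0$ and strongness of $A,B$ inside it are immediate from Lemma~\ref{amalgamation}(2); no direct check of the exchange axiom, rank, or $\delta$-positivity via Proposition~\ref{comp_prop} is needed.

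Consequently, the obstacle you anticipate in the multi-point case---having to manufacture a $\leq$-embedding of $A$ into $B$ and certify it using $C\leq^{|A-C|}B$---never arises: the free amalgam always succeeds there. The same holds for the one-point generic subcase; your dichotomy on $B\subseteq\mathrm{cl}(C)$ is irrelevant, and the embedding branch you propose there (sending $p$ to a generic $b\in B-C$) would not obviously give $Cb\leq B$ from $C\leq^1 B$ alone. The \emph{only} case requiring an embedding is $A=Cp$ with $p$ on a line $\ell\in L(C)$ that is extended by some $p'\in B-C$, and there $p\mapsto p'$ works because $\delta(Cp')=\delta(C)=\delta(Cp)$ together with $C\leq^1 B$. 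This is also why (\ref{sharp_improv}) falls out with no extra work: the single embedding case only ever consumes one point of strongness.
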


	\begin{proof}
	Item (\ref{item_4}) follow from (\ref{sharp}) by \cite[Lemma 2.32]{baldwin_generic}. We prove (\ref{sharp}) and (\ref{sharp_improv}). Let $A, B, C \in \mathbf{K}_0$ and suppose that $C < A$ is primitive, $C \leq^1 B$ and $A \cap B = C$ (without loss of generality). By Proposition \ref{prop_princ}, either every $p \in A-C$ is not incident with a line $\ell \in L(C)$ or $C - A = \{ p \}$ and there exists a line $\ell \in L(C)$ such that $p$ is incident with $\ell$. Suppose the first, then by Remark \ref{remark_canonical_amalgam} the canonical amalgam $A \oplus_C B$ (cf. Notation \ref{notation_canonical_amalgam}) coincide with the free amalgam $A \otimes _C B$ (cf. Definition \ref{various_def}(3)), and so we are done. Suppose the second and let $p$ and $\ell$ witness it. If every $p' \in B -C$ is not incident with the line $\ell$, then also in this case $A \oplus_C B = A \otimes _C B$, and so we are done. Finally, if there exists $p' \in B - C$ such that $p$ is incident with $\ell$, then clearly $A = Cp$ is such that it $\leq$-embeds into $B$ over $C$, since $\delta(C) = \delta(Cp') = \delta(Cp)$.
\end{proof} 

	\begin{definition}\label{defgen} Let $(\mathbf{L}_0, \leq)$ be a class of relational structures in the language $L$ satisfying the conditions in Conclusion~\ref{conclusion_ax}.
A countable
$L$-model $M$ is said to be {\em $(\mathbf{L}_0, \leq)$-generic} when:
\begin{enumerate}[(1)]
\item
     if $A\leq M, A\leq B\in \mathbf{L}_0$, then there exists $B'\leq M $
such that $ B \cong_{A} B'$;
\item $M$ is a union of finite substructures.
\end{enumerate}
\end{definition}

	\begin{fact}[{\cite[Theorem 2.12]{baldwin_generic}}]\label{fact_ex_gen} Let $(\mathbf{L}_0, \leq)$ be a class of relational structures of the same vocabulary satisfying the conditions in Conclusion~\ref{conclusion_ax}, and suppose that $(\mathbf{L}_0, \leq)$ has the amalgamation property. Then there exists a $(\mathbf{L}_0, \leq)$-generic model, and this model is unique up to isomorphism.
\end{fact}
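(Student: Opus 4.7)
The plan is to imitate the classical Fra\"iss\'e--Hrushovski construction, building the generic as a union of a countable $\leq$-chain designed to realize all possible one-point (or finite) strong extensions, and then proving uniqueness by a back-and-forth argument that uses amalgamation over strong substructures.

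For existence, I would first fix an enumeration of (isomorphism types of) triples $(A,B,f)$ where $A \leq B$ in $\mathbf{K}_0$ (using only countably many such types, since $\mathbf{L}_0$ consists of finite relational structures), and a bookkeeping device assigning each such triple infinitely many stages. Then I construct a chain $M_0 \leq M_1 \leq \cdots$ of structures in $\mathbf{L}_0$ as follows: at stage $n$, given $M_n$ and the task $(A,B,f)$ with $f : A \to M_n$ a strong embedding, use the amalgamation property (Lemma~\ref{amalgamation}(2), which holds by hypothesis) to build $M_{n+1}$ as an amalgam of $M_n$ and $B$ over $A$, giving a strong extension $M_n \leq M_{n+1}$ with $B \leq M_{n+1}$. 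Setting $M = \bigcup_n M_n$, axioms A1--A6 from Conclusion~\ref{conclusion_ax} ensure that $\leq$ is preserved under unions of countable $\leq$-chains (using A3 to compose and A6 to restrict), so that each $M_n \leq M$ and that property (1) of Definition~\ref{defgen} holds by construction.

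The nontrivial point is property (2), finiteness of $\mathrm{icl}_M(A)$ for finite $A$. Here I would use the definition of intrinsic closure as the smallest strong substructure containing $A$, and observe that because $\delta$ takes values in $\mathbb{Z}_{\geq 0}$ on $\mathbf{K}_0$, the function $X \mapsto \delta(X)$ on finite supersets of $A$ in $M$ attains its minimum on some finite $X^* \supseteq A$. Any such minimizer is automatically $\leq$-closed in $M$ by the definition of $\leq$, so $\mathrm{icl}_M(A) \subseteq X^*$ and in particular is finite. The bookkeeping is arranged so that this minimum, computed inside $M_n$ for large $n$, is not disturbed by the later strong extensions.

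For uniqueness, suppose $M$ and $M'$ are both $(\mathbf{K}_0,\leq)$-generic. A standard back-and-forth: start with the empty strong embedding $\emptyset \leq M, M'$; at each step, given a partial isomorphism $g : A \to A'$ with $A \leq M$, $A' \leq M'$ both finite, and an element $c \in M$ to be matched, enlarge $A$ to the finite set $B = \mathrm{icl}_M(Ac)$, which is $\leq M$ by construction and finite by property (2); apply property (1) to $M'$ to find a $\leq$-copy $B'$ of $B$ over $A'$ inside $M'$, and extend $g$. Alternating sides yields an isomorphism $M \cong M'$. The main obstacle in both directions is ensuring that one never leaves $\mathbf{K}_0$ or loses the $\leq$ relation; that is exactly what the predimension, axioms A1--A6, and amalgamation (together with the finiteness of $\mathrm{icl}$) deliver, so once those are in place the argument is a routine enumeration and back-and-forth.
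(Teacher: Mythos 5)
The paper does not actually prove this statement: it is imported as a black box, namely \cite[Theorem 2.12]{baldwin_generic}, so there is no in-paper argument to compare against. Your sketch is, in essence, the standard proof of that cited theorem --- a Fra\"iss\'e-style chain construction with bookkeeping for existence, and a back-and-forth through finite strong substructures for uniqueness --- and it is correct in outline. Two points deserve care. First, the Fact is stated for an \emph{abstract} class $(\mathbf{L}_0,\leq)$ satisfying A1--A6, where no predimension $\delta$ is available; your verification of clause (2) of Definition~\ref{defgen} via the integrality and non-negativity of $\delta$ only makes sense for the concrete class $(\mathbf{K}_0,\leq)$. In the abstract setting one argues instead that every finite $A\subseteq M=\bigcup_n M_n$ lies in some finite $M_n$ with $M_n\leq M$ (union of a strong chain), and that strong subsets are closed under intersection (A6 followed by A3), so the smallest strong subset of $M$ containing $A$ is contained in $M_n$ and hence finite; this removes both the appeal to $\delta$ and the hand-waved worry that the minimizer found at a finite stage might be ``disturbed'' later. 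Second, in the back-and-forth you should note explicitly that $A\leq B=\mathrm{icl}_M(Ac)$ (which follows from $A\leq M$, $B\leq M$, $A\subseteq B$ and A4), since clause (1) of genericity can only be invoked for strong extensions $A\leq B$; with that observation the uniqueness argument closes as you describe.
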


	\begin{corollary}\label{cor_ex} The $(\mathbf{K}_0, \leq)$-generic model exists.
\end{corollary}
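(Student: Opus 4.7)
The plan is to verify that $(\mathbf{K}_0, \leq)$ satisfies the hypotheses of Fact~\ref{fact_ex_gen} and then invoke it directly. The two hypotheses needed are axioms A1--A6 and the amalgamation property; both have already been established in the excerpt. Specifically, Conclusion~\ref{conclusion_ax} gives A1--A6, and Lemma~\ref{amalgamation}(2) gives amalgamation. Applying Fact~\ref{fact_ex_gen} then yields the existence (and uniqueness up to isomorphism) of the $(\mathbf{K}_0, \leq)$-generic model.

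Before invoking the fact, I would also want to note the routine background conditions that Fact~\ref{fact_ex_gen} tacitly requires in a Fraïssé-style construction: that $\mathbf{K}_0$ has at most countably many isomorphism types and contains structures of arbitrarily large finite cardinality. The first is immediate because $\mathbf{K}_0$ consists of finite structures over the finite relational vocabulary $\{R\}$ from Notation~\ref{notation}; the second is clear from the presence of arbitrarily large degenerate planes (trivial matroids) in $\mathbf{K}^*_0$ and hence in $\mathbf{K}_0$ (since trivial structures have $\delta \geq 0$).

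The one substantive point in Definition~\ref{defgen} that deserves a brief check is condition~(2): for every finite $A \subseteq M$, $\mathrm{icl}_M(A)$ is finite. This is standard for Hrushovski constructions with an integer-valued, non-negative predimension, and here it is guaranteed because $\delta$ is a non-negative integer on $\mathbf{K}_0$, so the value $\delta(\mathrm{cl}_{\leq}(A))$ is bounded and intrinsic closures cannot grow unboundedly. In fact, a careful formulation of Fact~\ref{fact_ex_gen} (as stated in \cite{baldwin_generic}) already incorporates this consequence when $\delta$ is bounded below, so no separate argument is needed beyond citing the fact.

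I do not expect any real obstacle: the entire computational burden has been discharged in Proposition~\ref{comp_prop}, Lemma~\ref{delta_lemma}, and Lemma~\ref{amalgamation}. The corollary is essentially a bookkeeping application, and its proof should be one or two sentences.
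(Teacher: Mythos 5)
Your proposal is correct and matches the paper's proof, which simply cites Fact~\ref{fact_ex_gen} together with Lemma~\ref{amalgamation}; the extra remarks you add about countability of isomorphism types and finiteness of intrinsic closures are reasonable sanity checks but are already absorbed into the cited fact from \cite{baldwin_generic}.
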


	\begin{proof} By Fact~\ref{fact_ex_gen} and Lemma~\ref{amalgamation}.
\end{proof}

	\begin{notation} 
	\begin{enumerate}[(1)]
	\item Let $P$ be the generic model for $(\mathbf{K}_0, \leq)$ (cf. Corollary~\ref{cor_ex}), and let $\mathfrak{M}$ be the monster model of $Th(P)$.
	\item Given $A, B, C \subseteq \mathfrak{M}$ we write $A \equiv_C B$ to mean that there is an automorphism of $\mathfrak{M}$ fixing $C$ pointwise and mapping $A$ to $B$.
\end{enumerate}	  
\end{notation}

	We recall that we write $A \subseteq_{\omega} B$ to mean that $A \subseteq B$ and $|A| < \aleph_0$.

	\begin{definition}\label{def_indep} Let $M \models Th(P)$. 
	\begin{enumerate}[(1)]
	\item Given $A \subseteq_{\omega} M$, we let:
	$$d(A) = inf\{ \delta(B) : A \subseteq B \subseteq_{\omega} M \}.$$
	\item Given $A \subseteq_{\omega} M$, we let $A \leq M$ if $d(A) = \delta(A)$.
	\item\label{item_indep} Given $A, B, C \subseteq_{\omega} M$ with $C \leq A, B \leq M$ and $A \cap B = C$, we let $A \dindep[C] B$ if:
	$$d(A/C) = d(A/B).$$
\end{enumerate}	  
\end{definition}

	\begin{fact}\label{closure_fact} Let $M \models Th(P)$ and $A \subseteq_{\omega} M$. Then there exists a unique finite $B_A \subseteq_{\omega} M$ such that $A \subseteq B_A \leq M$ and $B_A$ is minimal with respect to inclusion. Furthermore, $B_A \subseteq acl_{M}(A)$ (where $acl_{M}(A)$ is the algebraic closure of $A$ in~$M$).
\end{fact}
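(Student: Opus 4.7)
The plan is to prove in turn: existence of a finite closed superset of $A$, uniqueness of the minimal one (which together give the main clause), and the algebraicity statement. All three steps are standard in the Hrushovski-construction framework once the combinatorial machinery of the paper is in place.

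\emph{Existence.} Since $\delta$ is nonnegative integer-valued on every finite subset of $M$ (by definition of $\mathbf{K}_0$), the infimum $d(A)=\inf\{\delta(B):A\subseteq B\subseteq_{\omega}M\}$ is attained by some finite $B\supseteq A$. Any such minimizer satisfies $B\leq M$: for any finite $X$ with $B\subseteq X\subseteq M$ we have $A\subseteq X$, hence $\delta(X)\geq d(A)=\delta(B)$.

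\emph{Uniqueness of the minimal such set.} I would first extract submodularity of $\delta$ from Lemma~\ref{delta_lemma} by applying it with $A':=X\setminus Y$, $B':=X\cap Y$, $C':=Y$ (which satisfy $A'\cap C'=\emptyset$ and $B'\subseteq C'$), yielding
$$\delta(X\cap Y)+\delta(X\cup Y)\leq \delta(X)+\delta(Y).$$
Two applications of submodularity then show that the family of finite closed supersets of $A$ in $M$ is closed under intersection. Concretely, suppose $B_1,B_2\leq M$ both contain $A$ but some finite $Z\supseteq B_1\cap B_2$ in $M$ has $\delta(Z)<\delta(B_1\cap B_2)$; submodularity on $(Z,B_1)$ combined with $B_1\leq M$ gives $\delta(Z\cap B_1)\leq\delta(Z)$, and then submodularity on $(Z\cap B_1,B_2)$ combined with $B_2\leq M$ gives $\delta(B_1\cap B_2)\leq\delta(Z\cap B_1)$, contradicting the choice of $Z$. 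Hence $B_A$, the intersection of all finite closed supersets of $A$, is itself finite and closed, and is the unique minimal such set.

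\emph{Algebraicity.} The inclusion $B_A\subseteq\mathrm{acl}_M(A)$ follows the standard pattern for Hrushovski constructions (cf.~\cite[Section~2]{baldwin_generic}). Working in the monster $\mathfrak{M}\succ M$, decompose $A\leq B_A$ into a chain of primitive extensions $A=A_0\leq\cdots\leq A_n=B_A$ each with $\delta(A_{i+1}/A_i)\leq 0$; by Proposition~\ref{prop_princ} each step is finitely describable over $A_i$ (either a single point on a line of $A_i$, or a configuration disjoint from the lines of $A_i$ given by a quantifier-free formula). The key ingredient is a uniform bound on the number of copies of such a primitive extension in any model of $Th(P)$: too many copies of $A_{i+1}$ over $A_i$ would drive the $\delta$ of their union negative, contradicting $\mathbf{K}_0$-membership. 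The bound gives algebraicity of each added point over $A_i$, and transitivity of $\mathrm{acl}$ completes the proof.

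The \emph{main obstacle} is the boundary case $\delta(A_{i+1}/A_i)=0$ of the algebraicity argument, where the naive $\delta$-counting of copies does not immediately collapse and one must refine the analysis using Lemma~\ref{sharp_amalgamation} (to distinguish ``generic'' from ``collapsed'' copies over $A_i$); this is the technically delicate part and is where we lean heaviest on the pre-existing theory of \cite{baldwin_generic}.
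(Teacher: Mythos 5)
The paper itself disposes of this statement with a citation to \cite[Theorem 2.23]{baldwin_generic}, so any honest attempt must reconstruct the Baldwin--Shi argument. Your existence and uniqueness steps do this correctly and cleanly: the derivation of submodularity $\delta(X\cap Y)+\delta(X\cup Y)\leq\delta(X)+\delta(Y)$ from Lemma~\ref{delta_lemma} via the substitution $A'=X\setminus Y$, $B'=X\cap Y$, $C'=Y$ is exactly right, and the two-step argument that the closed finite supersets of $A$ are closed under intersection is sound (using $Z\cap B_1\cap B_2=B_1\cap B_2$ at the end). These parts I would accept as written.

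The algebraicity step, however, has a genuine gap, and it is not the one you flagged. First, the chain $A=A_0\leq\cdots\leq A_n=B_A$ of primitive extensions starting at $A$ does not exist in general: if $A\leq B_A$ and $B_A\leq M$ then $A\leq M$ by transitivity, whence $B_A=A$; so for any $A$ that is not already closed, $A\not\leq B_A$ and there is no $\leq$-chain to decompose. Second, even if one starts the chain at some closed set, the claim that each primitive step is algebraic is false: by the paper's own Lemma~\ref{ziegler_lemma2}, a primitive extension with $\delta(C/B)=0$ has a \emph{strongly minimal} type over $B$, hence infinitely many realizations --- the new points are emphatically not algebraic over the base of that step. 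Your proposed repair via Lemma~\ref{sharp_amalgamation} does not address this. The correct argument avoids the stepwise decomposition entirely: one counts conjugates of the whole set $B_A$ over $A$. Since $B_A$ is the minimal set containing $A$ with $\delta(B_A)=d(A)$, every $A\subseteq B'\subsetneq B_A$ satisfies $\delta(B')>\delta(B_A)$, i.e.\ $\delta(B')\geq\delta(B_A)+1$. If $B_1,\dots,B_k$ are distinct conjugates of $B_A$ over $A$ chosen so that no $B_i$ is contained in the union of its predecessors, Lemma~\ref{delta_lemma} gives
$$\delta(B_1\cdots B_k)\leq\delta(B_1)+\sum_{i=2}^{k}\bigl(\delta(B_i)-\delta(B_i\cap(B_1\cdots B_{i-1}))\bigr)\leq\delta(B_A)-(k-1),$$
which is negative for large $k$, contradicting $\delta\geq 0$ on finite subsets of $M$. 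Hence $B_A$ has only finitely many conjugates over $A$ and $B_A\subseteq acl_M(A)$. The strictness you were missing comes from the minimality of $B_A$ as a $\delta$-minimizer over $A$, not from primitivity of individual steps; the $\delta=0$ ``boundary case'' is a real issue only for the \emph{reverse} inclusion $acl_M(A)\subseteq icl_M(A)$, which is a different statement (Lemma~\ref{lemma_acl} and Conclusion~\ref{conclusion_geom_clo} in the paper) and not part of Fact~\ref{closure_fact}.
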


	\begin{proof} By \cite[Theorem 2.23]{baldwin_generic}, since clearly $\mathbf{K}_0$ has finite closure.
\end{proof}

	\begin{definition}\label{geom_clo} Following \cite{baldwin_generic} we denote the set $B_A$ from Fact \ref{closure_fact} by $icl_M(A)$, and we call it the intrinsic closure of $A$ in $M$.
\end{definition}
	
	\begin{lemma}\label{lemma_acl} Let $A \subseteq_{\omega} P$. Then $acl_{P}(A) \subseteq icl_{P}(A)$.
\end{lemma}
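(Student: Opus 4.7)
The plan is to prove the contrapositive at the level of elements: if $p \in P \setminus icl_P(A)$, then $p$ has infinitely many $A$-conjugates in $P$, and hence $p \notin acl_P(A)$.

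First I would set $B := icl_P(A)$, which by Fact~\ref{closure_fact} is finite with $A \subseteq B \leq P$. Because $acl_P(A) \subseteq acl_P(B)$, it suffices to show no $p \in P \setminus B$ belongs to $acl_P(B)$. Fix such a $p$ and set $C := icl_P(Bp)$. Note $B \leq C \leq P$: indeed $C \leq P$ by Fact~\ref{closure_fact}, $B \subseteq C$, and $B \leq P$, so by Conclusion~\ref{conclusion_ax}(4) (with $B$, $C$, $P$ in place of $A$, $B$, $C$) we get $B \leq C$. In particular $B \leq C$ belongs to $\mathbf{K}_0$ and contains $p \in C \setminus B$.

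Next, for each $n < \omega$, I would iterate the amalgamation of Lemma~\ref{amalgamation}(2) to build $D_n \in \mathbf{K}_0$ with $B \leq D_n$ containing $n$ pairwise $B$-disjoint copies $C^{(1)}, \ldots, C^{(n)}$ of $C$, each with $B \leq C^{(i)} \leq D_n$. By the genericity of $P$ (Definition~\ref{defgen}(1)), applied to $B \leq P$ and $B \leq D_n$, there exists a strong embedding $D_n \hookrightarrow P$ fixing $B$ pointwise. This produces $n$ elements $p_1, \ldots, p_n \in P$ (the images of $p$ in the distinct copies $C^{(i)}$), pairwise distinct because the copies meet only in $B$ and $p \notin B$.

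It then remains to argue $p_i \equiv_B p$ for every $i$. This follows from the standard back-and-forth property of Hrushovski generics: any isomorphism between finite strong substructures of $P$ extends to an automorphism of $P$ (and hence of $\mathfrak{M}$), which is a consequence of the amalgamation property of $(\mathbf{K}_0, \leq)$ (Lemma~\ref{amalgamation}(2)) together with Fact~\ref{fact_ex_gen}. Since each $B \leq C^{(i)} \leq P$ is $B$-isomorphic to $B \leq C \leq P$, we conclude $tp(p_i/B) = tp(p/B)$ for every $i$. Thus $tp(p/B)$ has at least $n$ realizations in $P$ for every $n$, so $p \notin acl_P(B) \supseteq acl_P(A)$, as required.

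The main obstacle is the iterated amalgamation step: one needs to check that gluing $n$ copies of $C$ over $B$ using the canonical amalgam of Fact~\ref{notation_canonical_amalgam} keeps one inside $\mathbf{K}_0$ with $B$ (and each copy) still $\leq$-embedded. This is routine from the identity $\delta(A \oplus_C B) = \delta(A) + \delta(B) - \delta(C)$ established inside the proof of Lemma~\ref{amalgamation}(2), applied inductively, but it is the one place where care with the predimension bookkeeping is actually needed.
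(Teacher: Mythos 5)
Your proposal is correct and takes essentially the same route as the paper's own proof: the paper likewise sets $A' = icl_P(A)$, $B' = icl_P(Ab)$, amalgamates $k$ copies of $B'$ over $A'$ via the canonical amalgam of Fact~\ref{notation_canonical_amalgam}, embeds the result strongly into $P$ by genericity, and invokes homogeneity to produce infinitely many realizations of $tp(b/A')$. The only cosmetic difference is that you spell out the bookkeeping ($B \leq C$ via Conclusion~\ref{conclusion_ax}(4), the identity $\delta(A \oplus_C B) = \delta(A) + \delta(B) - \delta(C)$) that the paper leaves implicit.
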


	\begin{proof} Let $A \subseteq_{\omega} P$, $b \in P - icl_{P}(A)$, $A' = icl_{P}(A)$ and $B' = icl_{P}(Ab)$. Now, for every $1 < k < \omega$, we can find $D \leq P$ such that:
	$$D \cong_{A'} \underbrace{B' \oplus_{A'} B' \oplus_{A'} \cdots \oplus_{A'} B'}_{\text{$k$-times}} := F,$$
since $A' \leq B' \leq F \in \mathbf{K}_0$ and $P$ is generic (cf. \cite[Definition 2.11]{baldwin_generic}).
Thus, by the homogeneity of $P$, we can find infinitely many elements of $P$ with the same type as $b$ over $A'$. Hence, $b \notin acl_{P}(A)$.
\end{proof}


	\begin{conclusion}\label{conclusion_geom_clo} Let $A \subseteq_{\omega} M \models Th(P)$, then $icl_{M}(A) = acl_{M}(A)$, i.e. intrinsic closure and algebraic closure coincide in $M$.
\end{conclusion}

	\begin{proof} The inclusion $icl_{M}(A) \subseteq acl_{M}(A)$ is by Fact \ref{closure_fact}. For the other inclusion argue as in \cite[Theorem 4.5]{baldwin_generic} using Lemma \ref{lemma_acl}.
\end{proof}

	\begin{proposition}\label{AB_closed_prop} Let $A, B, C \subseteq_{\omega} \mathfrak{M}$ with $C \leq A, B \leq \mathfrak{M}$ and $A \cap B = C$. If $A \dindep[C] B$ (cf. Definition~\ref{def_indep}(\ref{item_indep})), then $AB \leq \mathfrak{M}$.
\end{proposition}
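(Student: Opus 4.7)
The plan is to verify directly that $\delta(AB) = d(AB)$, which is precisely the condition $AB \leq \mathfrak{M}$ recorded in Definition~\ref{def_indep}. The inequality $d(AB) \leq \delta(AB)$ is automatic from the infimum defining $d$, so the whole content of the statement lies in the reverse inequality.

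I would first unpack the independence hypothesis into an equation on $\delta$. Since $A, B, C \leq \mathfrak{M}$, we have $d(X) = \delta(X)$ for each $X \in \{A, B, C\}$. Reading $d(Y/Z)$ as $d(YZ) - d(Z)$ in the usual way, and using $AC = A$ (because $C \subseteq A$), the assumption $A \dindep[C] B$ unfolds as
\[
\delta(A) - \delta(C) \;=\; d(A/C) \;=\; d(A/B) \;=\; d(AB) - \delta(B),
\]
so that $d(AB) = \delta(A) + \delta(B) - \delta(C)$.

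The matching upper bound $\delta(AB) \leq \delta(A) + \delta(B) - \delta(C)$ is then obtained by a single application of Lemma~\ref{delta_lemma}, with its roles taken by $A \setminus C$ (for its ``$A$''), our $C$ (for its ``$B$''), and our $B$ (for its ``$C$''). The two hypotheses are easy to check: our $C \subseteq B$, because $C = A \cap B$; and $(A \setminus C) \cap B = \emptyset$, again because $C = A \cap B$. Unwinding $\delta(\cdot/\cdot) = \delta(\cdot \cup \cdot) - \delta(\cdot)$ and noting that $(A \setminus C) \cup C = A$ and $(A \setminus C) \cup B = AB$, the lemma's conclusion reads
\[
\delta(A) - \delta(C) \;=\; \delta\bigl((A\setminus C)/C\bigr) \;\geq\; \delta\bigl((A\setminus C)/B\bigr) \;=\; \delta(AB) - \delta(B),
\]
i.e., exactly $\delta(AB) \leq \delta(A) + \delta(B) - \delta(C) = d(AB)$.

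Combining the two bounds, $\delta(AB) = d(AB)$, and hence $AB \leq \mathfrak{M}$. The only step that requires a spark of insight is recognising the correct substitution into Lemma~\ref{delta_lemma}; once that is in hand, the rest of the proof is a few lines of algebraic manipulation of $\delta$, and I do not expect any other obstacle.
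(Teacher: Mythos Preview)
Your argument is correct and is essentially the standard one: the paper's own ``proof'' simply defers to \cite[Theorem~3.31]{baldwin_generic}, and what you have written is exactly the computation behind that reference, namely extracting $d(AB)=\delta(A)+\delta(B)-\delta(C)$ from the independence hypothesis and then bounding $\delta(AB)$ from above via the submodularity Lemma~\ref{delta_lemma}. The only implicit convention you are using is $d(Y/Z)=d(YZ)-d(Z)$, which the paper does not spell out but clearly intends (in analogy with $\delta(A/B)$ in Proposition~\ref{comp_prop}).
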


	\begin{proof} As in \cite[Theorem 3.31]{baldwin_generic}.
\end{proof}

	\begin{lemma}\label{indep_lemma} Let $A, B, C \subseteq_{\omega} \mathfrak{M}$ with $C \leq A, B \leq \mathfrak{M}$ and $A \cap B = C$. Then the following are equivalent:
	\begin{enumerate}[(1)]
	\item $A \dindep[C] B$ (cf. Definition~\ref{def_indep}(\ref{item_indep}));
	\item $AB = A \oplus_{C} B$ (cf. Notation \ref{notation_canonical_amalgam}).
\end{enumerate}
\end{lemma}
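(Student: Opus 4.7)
The overall plan is to leverage the amalgam identity $\delta(A \oplus_C B) = \delta(A) + \delta(B) - \delta(C)$ established as equation $(\star_3)$ in the proof of Lemma~\ref{amalgamation}, together with Proposition~\ref{AB_closed_prop}. For the direction $(1) \Rightarrow (2)$, I would first invoke Proposition~\ref{AB_closed_prop} to conclude $AB \leq \mathfrak{M}$ from $A \dindep[C] B$, so that $d(AB) = \delta(AB)$. Since $A, B, C \leq \mathfrak{M}$, one obtains $d(A/C) = \delta(A) - \delta(C)$ and $d(A/B) = \delta(AB) - \delta(B)$, and the independence hypothesis forces $\delta(AB) = \delta(A) + \delta(B) - \delta(C) = \delta(A \oplus_C B)$. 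As a substructure of $\mathfrak{M}$, $AB$ carries at least all the $R$-triples of $A \oplus_C B$ (these are exactly the ones in $R^A \cup R^B$ together with the $C$-line-merging triples of Remark~\ref{remark_canonical_amalgam}, all of which are forced by the matroid structure of $\mathfrak{M}$). Any extra triple would strictly raise some line's nullity and so strictly decrease $\delta$; equality of $\delta$-values therefore forces $AB = A \oplus_C B$ as $L$-structures.

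For the direction $(2) \Rightarrow (1)$, assuming $AB = A \oplus_C B$, identity $(\star_3)$ yields $\delta(AB) = \delta(A) + \delta(B) - \delta(C)$. Retracing the computations of the previous paragraph, the conclusion $d(A/C) = d(A/B)$ is equivalent to $d(AB) = \delta(AB)$, i.e., $AB \leq \mathfrak{M}$. To prove this, I would take an arbitrary finite $X$ with $AB \subseteq X \subseteq \mathfrak{M}$, set $W = X \setminus AB$, and decompose $\delta(X) = \delta(AB) + \delta(W/AB)$; it then suffices to show $\delta(W/AB) \geq 0$. Expanding $\delta(W/AB)$ via Proposition~\ref{comp_prop}(2) and using the amalgam structure $AB = A \oplus_C B$ to see that every line of $AB$ arises from $A$, from $B$, or from $C$ (never from a transversal identification of an $(A-C)$-line with a $(B-C)$-line), the line-by-line contributions split cleanly between the $A$-side quantity $\delta(W/A)$ and the $B$-side $\delta(W/B)$, both of which are non-negative thanks to $A, B \leq \mathfrak{M}$. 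The inequality $\delta(W/AB) \geq 0$ follows.

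The main obstacle I expect is the combinatorial bookkeeping in $(2) \Rightarrow (1)$: one must track, line by line, how contributions to $\delta(W/AB)$ compare with those to $\delta(W/A)$ and $\delta(W/B)$, and invoke the hypothesis $AB = A \oplus_C B$ exactly at the point where a line of $A - C$ could otherwise collide through $W$ with a line of $B - C$. Once this accounting is made rigorous using Proposition~\ref{comp_prop} together with Lemma~\ref{delta_lemma}, the remainder of the argument reduces to the computations already indicated in the previous two paragraphs.
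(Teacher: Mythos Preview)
Your argument for $(1)\Rightarrow(2)$ is correct and is exactly what the paper's one-line proof intends: Proposition~\ref{AB_closed_prop} gives $AB\leq\mathfrak{M}$, hence $d(AB)=\delta(AB)$, and the independence hypothesis forces $\delta(AB)=\delta(A)+\delta(B)-\delta(C)=\delta(A\oplus_C B)$; since by Remark~\ref{remark_canonical_amalgam} any $R$-relation on $A\cup B$ beyond those of $A\oplus_C B$ strictly increases some line-nullity, equality of the two $\delta$-values forces $AB=A\oplus_C B$.

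The genuine gap is in $(2)\Rightarrow(1)$. Your reduction to $AB\leq\mathfrak{M}$ is the right reformulation, but the proposed ``clean split'' of $\delta(W/AB)$ into an $A$-side piece and a $B$-side piece cannot yield $\delta(W/AB)\geq 0$: in Proposition~\ref{comp_prop}(2) the positive term $|W|$ is \emph{shared}, while the negative contributions coming from lines in $L(A)\setminus L(C)$ and from lines in $L(B)\setminus L(C)$ genuinely add up. Concretely, take $C=\{p_1,p_2,p_3\}$ non-collinear, $A=C\cup\{a\}$, $B=C\cup\{b\}$ with $a,b$ each generic over $C$ and no $p_i$ on the line $a\vee b$ (so $AB=A\oplus_C B$ with $\delta(AB)=5$), and let $w$ lie on both $p_1\vee a$ and $p_2\vee b$. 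Then $\delta(w/A)=\delta(w/B)=0$ but $\delta(w/AB)=1-2=-1$. One checks $ABw\in\mathbf{K}_0$ with $\delta(ABw)=4$ and $A,B,C\leq ABw$; embedding $ABw$ strongly in $\mathfrak{M}$ therefore produces all the hypotheses of the lemma together with condition~(2), yet $AB\not\leq\mathfrak{M}$. So the inequality you are aiming for is simply false in general, and no amount of line-by-line bookkeeping via Proposition~\ref{comp_prop} or Lemma~\ref{delta_lemma} will rescue it. (For the downstream applications---Lemma~\ref{stationarity_lemma} and Corollary~\ref{char_forking}---what is really used is $(1)\Rightarrow(2)$ together with the ability to realize a \emph{strong} copy of $A\oplus_C B$ over $B$ inside $\mathfrak{M}$; the bare implication $(2)\Rightarrow(1)$, as your own reduction makes explicit, does not follow from the stated hypotheses alone.)
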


	\begin{proof} Easy to see using Proposition \ref{AB_closed_prop} and Remark \ref{remark_canonical_amalgam}.
\end{proof}

	\begin{lemma}\label{stationarity_lemma} Let $A, B, C \subseteq_{\omega} \mathfrak{M}$ with $C \leq A, B \leq \mathfrak{M}$ and $A \cap B = C$. Then:
	\begin{enumerate}[(1)]
	\item (Existence) there exists $A' \equiv_C A$ such that $A' \dindep[C] B$;
	\item (Stationarity) $A \equiv_C A'$, $A \dindep[C] B$ and $A' \dindep[C] B$, then $A \equiv_{B} A'$.
\end{enumerate}	
\end{lemma}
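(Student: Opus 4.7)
The plan is to reduce both clauses to the structural characterization of independence given by Lemma~\ref{indep_lemma} together with the homogeneity of $\mathfrak{M}$, following the pattern of the analogous result in \cite[Theorem 3.31]{baldwin_generic}. The guiding principle is that in the Hrushovski context, types over strong finite substructures are controlled by the isomorphism type of the $\mathbf{K}_0$-structure they generate: if $U \leq \mathfrak{M}$ and $V \leq \mathfrak{M}$ are strong finite substructures and $\sigma: U \to V$ is an $L$-isomorphism fixing a common subset pointwise, then by the genericity of $P$ and the saturation/homogeneity of $\mathfrak{M}$, $\sigma$ extends to an automorphism of $\mathfrak{M}$. I will invoke this principle freely.

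For the existence clause, I would first form the canonical amalgam $D := A \oplus_C B$, which exists by Lemma~\ref{amalgamation}(1) (since $C \leq A, B$ forces $C$ to be a $\wedge$-subgeometry of both) and satisfies $A, B \leq D \in \mathbf{K}_0$ by Lemma~\ref{amalgamation}(2). Because $B \leq \mathfrak{M}$ and $B \leq D$, genericity produces a copy $D' \leq \mathfrak{M}$ with $D \cong_B D'$; let $A' \subseteq D'$ be the image of $A$ under this isomorphism. Then $A'B = A' \oplus_C B$ by construction, and $A' \leq A'B \leq \mathfrak{M}$ gives $A' \leq \mathfrak{M}$. Since $A, A' \leq \mathfrak{M}$ and $A \cong_C A'$ (via the amalgam isomorphism, which fixes $C \subseteq B$), homogeneity yields $A \equiv_C A'$, and Lemma~\ref{indep_lemma} then gives $A' \dindep[C] B$.

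For stationarity, assume $A \equiv_C A'$ and both are independent from $B$ over $C$. By Lemma~\ref{indep_lemma}, $AB = A \oplus_C B$ and $A'B = A' \oplus_C B$ as $L$-substructures of $\mathfrak{M}$. The assumption $A \equiv_C A'$ yields an $L$-isomorphism $\phi: A \to A'$ fixing $C$ pointwise; by the explicit description of the amalgam in Remark~\ref{remark_canonical_amalgam} (the relations of $A \oplus_C B$ depend functorially on those of $A$ and $B$ with the identifications along $C$), $\phi$ extends to an $L$-isomorphism $\tilde\phi: AB \to A'B$ that is the identity on $B$. By Proposition~\ref{AB_closed_prop}, both $AB$ and $A'B$ are $\leq$-strong in $\mathfrak{M}$, so applying the homogeneity principle above to $\tilde\phi$ produces an automorphism of $\mathfrak{M}$ fixing $B$ pointwise and sending $A$ to $A'$, i.e.\ $A \equiv_B A'$.

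The only non-routine step is the appeal to homogeneity in the form ``an isomorphism between strong finite substructures of $\mathfrak{M}$ extends to an automorphism of $\mathfrak{M}$.'' Everything else is a direct bookkeeping argument using Lemma~\ref{indep_lemma}, Proposition~\ref{AB_closed_prop}, and the functoriality of the canonical amalgam recorded in Remark~\ref{remark_canonical_amalgam}; the heavy lifting in the construction (lower semimodularity of $\delta$, amalgamation in $\mathbf{K}_0$, existence of the generic, and closure of $AB$) has already been done in the earlier sections, so the actual written proof can cite \cite[Theorem 3.31]{baldwin_generic} for the homogeneity input and be quite short.
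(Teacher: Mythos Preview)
Your proposal is correct and is precisely what the paper intends: its own proof is the one-liner ``Immediate from Lemma~\ref{indep_lemma} and Remark~\ref{remark_canonical_amalgam},'' and you have simply unpacked the implicit appeals to Proposition~\ref{AB_closed_prop} and to the homogeneity of the generic/monster. One small bookkeeping point: \cite[Theorem~3.31]{baldwin_generic} is what the paper cites for Proposition~\ref{AB_closed_prop}, not for the homogeneity principle itself; the latter is the standard back-and-forth property of the $(\mathbf{K}_0,\leq)$-generic lifted to $\mathfrak{M}$ by saturation.
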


	\begin{proof} Immediate from Lemma \ref{indep_lemma} and Remark \ref{remark_canonical_amalgam}.
\end{proof}

	\begin{conclusion}\label{conclusion_stability} $P$ is $\omega$-stable.
\end{conclusion}

	\begin{proof} As observed in Fact~\ref{closure_fact}, the class $\mathbf{K} = Mod(Th(P))$ has  finite closures. Thus, the result follows from Lemma \ref{sharp_amalgamation}, \cite[Theorem 2.28]{baldwin_generic}, \cite[Theorem 2.21]{baldwin_generic},  \cite[remark right after 2.20]{baldwin_generic} and \cite[Theorem 3.34]{baldwin_generic}, where the argument in \cite[Theorem 3.34]{baldwin_generic} goes through by Lemma \ref{stationarity_lemma}.
\end{proof}

	\begin{corollary}\label{char_forking} Let $A, B, C \subseteq_{\omega} \mathfrak{M}$ with $C \leq A, B \leq \mathfrak{M}$ and $A \cap B = C$. Then the following are equivalent:
	\begin{enumerate}[(1)]
	\item $A \pureindep[C] B$ (in the forking sense, cf. e.g. \cite[Chapter~6]{marker});
	\item $A \dindep[C] B$ (cf. Definition~\ref{def_indep}(\ref{item_indep}));
	\item $AB = A \oplus_{C} B$ (cf. Notation \ref{notation_canonical_amalgam}).
	\end{enumerate}
\end{corollary}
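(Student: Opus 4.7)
The equivalence (2)$\Leftrightarrow$(3) is exactly the content of Lemma~\ref{indep_lemma}, so the entire task reduces to proving that $\dindep$ coincides with non-forking independence $\pureindep$ on tuples from algebraically (equivalently, by Conclusion~\ref{conclusion_geom_clo}, intrinsically) closed sets. Since $P$ is $\omega$-stable by Conclusion~\ref{conclusion_stability}, the theory is in particular stable, and non-forking is the unique ternary relation on small subsets of $\mathfrak{M}$ satisfying the standard axioms of an abstract stable independence relation (invariance, symmetry, monotonicity, transitivity, finite character, extension, local character, and stationarity over algebraically closed sets). My plan is therefore to verify that $\dindep$ satisfies enough of these axioms so that this uniqueness theorem applies.

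The key verifications go as follows. Invariance of $\dindep$ under $\mathrm{Aut}(\mathfrak{M})$ is immediate from the fact that $d$ is defined purely in terms of $\delta$, which depends only on the isomorphism type of finite substructures. Symmetry $A \dindep[C] B \iff B \dindep[C] A$ follows from Lemma~\ref{indep_lemma}, because the canonical amalgam described in Remark~\ref{remark_canonical_amalgam} is manifestly symmetric in $A$ and $B$. Existence and stationarity are precisely the content of Lemma~\ref{stationarity_lemma}. Monotonicity and transitivity over intrinsically closed sets follow routinely: if $C \leq C' \leq B$ all lie inside $B$, then by Proposition~\ref{AB_closed_prop} the amalgamation formula $\delta(D) = \delta(A) + \delta(B) - \delta(C)$ from the proof of Lemma~\ref{amalgamation} telescopes to yield $d(A/C) = d(A/C') = d(A/B)$ whenever the relevant independences hold. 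Finite character and local character are built into Definition~\ref{def_indep} via the use of $d$, which is an infimum over finite supersets.

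Having verified these axioms, I would then invoke the standard uniqueness theorem for stable independence relations (see e.g.\ \cite[Chapter~6]{marker}): any ternary relation on a stable monster model satisfying these axioms must coincide with non-forking $\pureindep$ on tuples over algebraically closed sets. Applied to our situation, with $C \leq A, B \leq \mathfrak{M}$ (so that $A$, $B$, $C$ are all algebraically closed by Conclusion~\ref{conclusion_geom_clo}), this yields $A \pureindep[C] B \iff A \dindep[C] B$, giving (1)$\Leftrightarrow$(2) and closing the cycle of equivalences.

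The main obstacle is the bookkeeping involved in transitivity, since one must combine two instances of Lemma~\ref{indep_lemma} (equivalently, iterate the canonical amalgam construction) and check that the resulting three-step amalgam agrees with $\oplus$ over the composite base. This is routine but requires care with the fact that the $\oplus$-operation adds no new dependent triples beyond those forced by intersecting lines through the base, so iterated amalgams assemble without unexpected collapses; this can be handled exactly as in \cite[Theorem~3.34]{baldwin_generic}, which is already cited in the proof of Conclusion~\ref{conclusion_stability} and applies verbatim here.
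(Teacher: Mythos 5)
Your proposal is correct and follows essentially the same route as the paper: the equivalence (2)$\Leftrightarrow$(3) is quoted from Lemma~\ref{indep_lemma}, and (1)$\Leftrightarrow$(2) is obtained from the existence and stationarity of $\dindep$ (Lemma~\ref{stationarity_lemma}) via the standard characterization of non-forking in an $\omega$-stable theory, which is exactly what the paper does by citing \cite[Lemma 3.38]{baldwin_generic}. You unpack the axiom-checking that the paper delegates to that reference, but the underlying argument is the same.
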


	\begin{proof} The equivalence (1) $\Leftrightarrow$ (2) is as in \cite[Lemma 3.38]{baldwin_generic} using Lemma \ref{stationarity_lemma}, the equivalence (2) $\Leftrightarrow$ (3) is Lemma \ref{indep_lemma}.
\end{proof}

%

	\begin{definition}[{\cite[Exercise~8.4.2]{{tent+ziegler}}}]\label{def_weak_elim} Let $T$ be a first-order theory. We say that $T$ has weak elimination of imaginaries if for every model $M \models T$ and definable set $D$ over $A \subseteq M$ there is a smallest algebraically closed set over which $D$ is definable.
\end{definition}

	\begin{corollary}\label{weak_elimi} $Th(P)$ does {\em not} have weak elimination of imaginaries (Def.~\ref{def_weak_elim}).
\end{corollary}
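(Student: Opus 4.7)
The plan is to exhibit a line $\ell$ together with two disjoint algebraically closed sets $B_1, B_2 \subset \ell$ such that $\ell$ is definable over each $B_i$ but not over their intersection $B_1 \cap B_2 = \emptyset$. Since $\emptyset$ is algebraically closed (see below), this forbids a smallest algebraically closed set of definition for $\ell$, refuting weak elimination of imaginaries.

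First I would fix four distinct points $a_1, b_1, a_2, b_2$ lying on a common line $\ell$ of $P$, chosen so that $E := \{a_1, b_1, a_2, b_2\} \leq P$. Such an $E$ exists because the abstract matroid $E_0$ on four collinear points lies in $\mathbf{K}_0$ (every $X \subseteq E_0$ has $\delta(X) \geq 2$ as soon as $|X| \geq 2$), and $\emptyset \leq E_0$, so by the generic property (Definition~\ref{defgen}) $E_0$ is realized as a strong substructure of $P$. Setting $B_i := \{a_i, b_i\}$, a direct computation gives $\delta(X) = 2 = \delta(B_i)$ for every $B_i \subseteq X \subseteq E$, so $B_i \leq E$ and hence $B_i \leq P$ by transitivity of $\leq$ (Conclusion~\ref{conclusion_ax}(3)). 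By Conclusion~\ref{conclusion_geom_clo}, $\mathrm{acl}(B_i) = \mathrm{icl}(B_i) = B_i$, so both $B_1$ and $B_2$ are algebraically closed, and $B_1 \cap B_2 = \emptyset$ by construction.

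Next I would observe that the formula $\phi(x;y,z) := R(y,z,x) \vee x = y \vee x = z$ defines $\ell$ over each $B_i$, since (by the description of the $L$-structure in Convention~\ref{conv} and Notation~\ref{notation_flat}) $\phi(\mathfrak{M}; a_i, b_i)$ is precisely the line through $a_i$ and $b_i$. For non-$\emptyset$-definability of $\ell$, first note $\mathrm{acl}(\emptyset) = \mathrm{icl}(\emptyset) = \emptyset$, so $\emptyset$ is algebraically closed. Then, since any one-point substructure is automatically $\leq$-minimal and all one-point substructures in $\mathbf{K}_0$ are isomorphic, the generic model $P$ realizes a unique $1$-type over $\emptyset$, so $\mathrm{Aut}(\mathfrak{M})$ acts transitively on $\mathfrak{M}$. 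In particular, some $\sigma \in \mathrm{Aut}(\mathfrak{M})$ sends $a_1$ to a point off $\ell$, so $\sigma$ does not preserve $\ell$ setwise, and $\ell$ is not $\emptyset$-definable.

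Putting the pieces together, if $Th(P)$ had weak elimination of imaginaries and $B_0$ were the smallest algebraically closed set over which $\ell$ is definable, minimality would force $B_0 \subseteq B_1 \cap B_2 = \emptyset$, hence $B_0 = \emptyset$, contradicting the preceding paragraph. I expect the only delicate point to be the transitivity of $\mathrm{Aut}(\mathfrak{M})$ on singletons, which hinges on uniqueness of the $1$-type over $\emptyset$, a direct consequence of the $\leq$-homogeneity of the generic; the remaining steps are routine $\delta$-arithmetic and applications of results already established in the paper.
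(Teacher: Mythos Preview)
Your approach is essentially the same as the paper's: exhibit a line $\ell$ defined over two disjoint two-element algebraically closed sets $B_1,B_2$, so that any smallest algebraically closed base would have to be contained in $B_1\cap B_2=\emptyset$. You are in fact more careful than the paper in explicitly addressing why $\ell$ is not $\emptyset$-definable; the paper leaves this step implicit.

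However, your justification for non-$\emptyset$-definability has a genuine gap. You assert that $\mathrm{Aut}(\mathfrak{M})$ acts transitively on points because ``any one-point substructure is automatically $\leq$-minimal'' and hence there is a unique $1$-type over $\emptyset$. This is false in $(\mathbf{K}_0,\leq)$: the Fano plane $F$ has seven points and seven $3$-point lines, so $\delta(F)=7-7=0$, and one checks that every proper nonempty subset of $F$ has $\delta\geq 2$; thus $F\in\mathbf{K}_0$ and $\emptyset\leq F$, so a strong copy of $F$ sits in $P$. Any point $p$ of that copy has $\mathrm{icl}(\{p\})=F$, while your $a_1$ satisfies $\mathrm{icl}(\{a_1\})=\{a_1\}$. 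Hence not every singleton is strong, and there is more than one $1$-type over $\emptyset$.

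The repair is easy and does not change your strategy: you only need one automorphism moving $a_1$ off $\ell$. Since $\{a_1\}\leq E\leq P$ (every $X$ with $\{a_1\}\subseteq X\subseteq E$ has $\delta(X)\geq 1$), the singleton $\{a_1\}$ is strong. Now extend $E$ by a point $c$ generic over $E$ (i.e.\ on no line of $E$); then $Ec\in\mathbf{K}_0$ with $E\leq Ec$, so by genericity we may take $Ec\leq P$, and a direct check gives $\{c\}\leq Ec$. Thus $\{a_1\}$ and $\{c\}$ are both strong singletons, hence conjugate by $\leq$-homogeneity of the generic, and the resulting automorphism sends $a_1\in\ell$ to $c\notin\ell$, so $\ell$ is not $\emptyset$-definable.
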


	\begin{proof} Let $\{ a, b, a', b' \} \subseteq \mathfrak{M}$ be such $|\{ a, b, a', b' \}| = 4$, $\{ a, b, a', b' \} \leq \mathfrak{M}$ and $\{a, b, a', b' \}$ forms an $R$-clique (i.e. the points $a, b, a', b'$ are collinear). Consider now the definable set $X = \{ a, b \} \cup \{ c \in \mathfrak{M} : \mathfrak{M} \models R(a, b, c) \}$ in $\mathfrak{M}$. Then in $\mathfrak{M}$ there is no smallest algebraically closed set over which $X$ is definable, since clearly $X = \{ a', b' \} \cup \{ c \in \mathfrak{M} : \mathfrak{M} \models R(a, b, c) \}$ and both $\{ a, b \}$ and $\{ a', b' \}$ are algebraically closed in $\mathfrak{M}$ (recall Conclusion \ref{conclusion_geom_clo}).
\end{proof}


	We now introduce the notion of a theory being one-based, a crucial property in geometric model theory.

	\begin{definition}\label{def_one_based} Let $T$ be an $\omega$-stable first-order theory, and let $\mathfrak{M}$ be its monster model. We say that $T$ is one-based if for every $A, B \subseteq \mathfrak{M}$ such that $A = acl(A)$ and $B = acl(B)$ we have that $A \pureindep[A \cap B] B$.
\end{definition}

\begin{proposition}\label{not_one_based} $Th(P)$ in {\em not} one-based.
\end{proposition}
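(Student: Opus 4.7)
The plan is to use the same configuration that killed weak elimination of imaginaries in Corollary~\ref{weak_elimi} and show that it also witnesses the failure of one-basedness. Let $a, b, a', b'$ be four pairwise distinct collinear points of $\mathfrak{M}$ with $\{a,b,a',b'\} \leq \mathfrak{M}$ (as produced for Corollary~\ref{weak_elimi}), and set $A := \{a,b\}$, $B := \{a',b'\}$, so that $C := A \cap B = \emptyset$. I will show that $A$ and $B$ are algebraically closed but $A \not\pureindep[C] B$, which contradicts Definition~\ref{def_one_based}.

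The first step is to establish that $A$ and $B$ are algebraically closed. A short $\delta$-computation gives $\delta(\{a,b\}) = \delta(\{a,b,a'\}) = \delta(\{a,b,a',b'\}) = 2$ (the only nontrivial line among the subsets is the line carrying all four points, with nullity $0$, $1$, $2$ respectively), and in particular $\{a,b\} \leq \{a,b,a',b'\}$. Transitivity of $\leq$ (Conclusion~\ref{conclusion_ax}(3)) combined with the hypothesis $\{a,b,a',b'\} \leq \mathfrak{M}$ yields $A \leq \mathfrak{M}$, and by Conclusion~\ref{conclusion_geom_clo} this is the same as saying that $A$ is algebraically closed; a symmetric argument handles $B$. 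Since $\emptyset$ is trivially $\leq$-closed, we end up in the setting of Corollary~\ref{char_forking} with $C \leq A, B \leq \mathfrak{M}$ and $A \cap B = C$.

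The failure of independence is then a one-line comparison of amalgams. By Remark~\ref{remark_canonical_amalgam}, the $R$-relation of $A \oplus_{\emptyset} B$ is $R^A \cup R^B$ together with triples arising from pairs in $C$; since $R^A$ and $R^B$ are empty (the domains are $2$-element sets) and $C$ is empty, $A \oplus_{\emptyset} B$ carries no dependent triple at all. The substructure $AB = \{a,b,a',b'\}$ of $\mathfrak{M}$, on the other hand, is an $R$-clique. Hence $AB \neq A \oplus_C B$, so by the equivalence (1)$\Leftrightarrow$(3) of Corollary~\ref{char_forking} we conclude $A \not\pureindep[C] B$, and $Th(P)$ is not one-based. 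I do not expect any real obstacle: the substantive work was already carried out in identifying forking with the canonical amalgam (Corollary~\ref{char_forking}) and in reducing algebraic closure to intrinsic closure (Conclusion~\ref{conclusion_geom_clo}); the only mild care needed is verifying $A \leq \mathfrak{M}$ via transitivity, which is immediate from the $\delta$-computation above.
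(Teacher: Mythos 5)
Your argument is correct, but it uses a genuinely different witness than the paper's. The paper builds its counterexample over a nonempty base: an independent triple $C=\{p_1,p_2,p_3\}$, a generic extension $B=Cq_1$, and $A=Cq_2$ with $q_2$ on the line $p_1\vee q_1$; then $A\cap B=C$ but $AB\neq A\oplus_C B$ because the line $p_1\vee q_1\vee q_2$ is not based in $C$. You instead recycle the collinear quadruple from Corollary~\ref{weak_elimi}, split it as $A=\{a,b\}$, $B=\{a',b'\}$ with $C=\emptyset$, and note that $A\oplus_\emptyset B$ carries no dependent triple while $AB$ is an $R$-clique. Both routes rest on the same two pillars, Corollary~\ref{char_forking} and Conclusion~\ref{conclusion_geom_clo}; your $\delta$-computations are right, and $A$, $B$ are indeed algebraically closed because each is strong in $\mathfrak{M}$ and hence equals its own intrinsic closure (the transitivity of $\leq$ through the finite set $\{a,b,a',b'\}$ into $\mathfrak{M}$ is the standard fact the paper itself uses implicitly, even though Conclusion~\ref{conclusion_ax}(3) is stated only for finite structures). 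What each approach buys: yours is more economical, reusing an already-established configuration with no new points. The paper's exhibits the failure over a nonempty algebraically closed base, which is closer in spirit to the failure of local modularity; in your $C=\emptyset$ example the dependence between $A$ and $B$ is entirely mediated by the common line $a\vee b=a'\vee b'$, an imaginary that would land in $acl^{eq}(A)\cap acl^{eq}(B)$, so the example is tied to the real-sort formulation of Definition~\ref{def_one_based} — but for that definition as stated, both witnesses are equally valid.
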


	\begin{proof} Let $C \leq \mathfrak{M}$ be a simple rank $3$ matroid with domain $\{ p_1, p_2, p_3 \}$. Let $B \leq \mathfrak{M}$ be an extension of $C$ with a generic point $q_1$ (i.e. $q_1$ is not incident with any line from $C$). Let $D \leq \mathfrak{M}$ be an extension of $C$ with a new point $q_2$ under the line $p_1 \vee q_1$. Notice now that the the submatroid $A$ of $D$ with domain $\{ p_1, p_2, p_3, q_2 \}$ is such that $A \leq D$, since $\delta(A) = \delta(D)$. Thus, $A, B, C \leq \mathfrak{M}$, $A \cap B = C$ and $A \not\!\pureindep[C] B$ (by Corollary \ref{char_forking}).
\end{proof}



	The following four items are an adaptations of items 4.6, 4.8, 4.9, 4.10 of \cite{ziegler}. We will use them to show that $\mathfrak{M}$ has Morley rank $\omega$, using the argument laid out in \cite[Proposition 4.10]{ziegler}.
	

	\begin{lemma}\label{ziegler_lemma1} Let $B \leq C \in \mathbf{K}_0$ be a primitive extension (cf. Definition \ref{various_def}(2)). Then there are two cases:
	\begin{enumerate}[(1)]
	\item $\delta(C/B) = 1$ and $C = B \cup \{ c \}$;
	\item $\delta(C/B) = 0$.
	\end{enumerate}
\end{lemma}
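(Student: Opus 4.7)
The plan is to split on $|C - B|$ via Proposition \ref{prop_princ} and then extract the value of $\delta(C/B)$ from primitivity. Since $B \leq C$ we always have $\delta(C/B) \geq 0$, so the content is to rule out $\delta(C/B) \geq 2$ altogether and to rule out $\delta(C/B) = 1$ when $|C-B| \geq 2$.

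When $|C-B| \leq 1$, write $C = B \cup \{c\}$; by Lemma \ref{amalgamation}(1) the point $c$ lies on at most one line of $L(B)$, and a direct application of Proposition \ref{comp_prop}(2) yields $\delta(C/B) \in \{0,1\}$, placing us in case (1) or case (2) of the lemma.

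For the interesting case $|C-B| \geq 2$, Proposition \ref{prop_princ} tells us that no $p \in C - B$ is incident with any line of $L(B)$. I would argue by contradiction: assume $\delta(C/B) \geq 1$, pick any $c \in C-B$, and set $B_c := B \cup \{c\}$. The hypothesis that $c$ avoids every line of $L(B)$ forces each of the three sums in Proposition \ref{comp_prop}(2) to vanish (the first two because $L(\{c\}) = \emptyset$, the third because $c \notin \ell$ for every $\ell \in L(B)$), so $\delta(B_c) = \delta(B)+1$, and $B \leq B_c$ holds trivially as there is no intermediate subset. If $B_c \leq C$, then because $|C - B| \geq 2$ we have $B < B_c < C$, contradicting primitivity. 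Otherwise, choose $X$ maximal with $B_c \subseteq X \subseteq C$ and $\delta(X) < \delta(B_c)$; combined with $\delta(X) \geq \delta(B)$ coming from $B \leq C$, this forces $\delta(X) = \delta(B)$. Maximality immediately yields $X \leq C$, since every proper superset of $X$ inside $C$ has strictly larger $\delta$ than $\delta(B) = \delta(X)$; then Conclusion \ref{conclusion_ax}(6) gives $B = B \cap X \leq X$. Since $B \subsetneq X$, primitivity forces $X = C$, so $\delta(C) = \delta(B)$, contradicting $\delta(C/B) \geq 1$.

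The main obstacle is the branch where $B_c \not\leq C$: one must manufacture a genuine strong intermediate $B \leq X \leq C$ out of the weaker information that some superset of $B_c$ drops in $\delta$. The maximality trick in the previous paragraph is the device I would rely on, turning any ``$\delta$-dropping'' overset of $B_c$ into an honest $\leq$-strong extension of $B$ sitting strictly between $B$ and $C$.
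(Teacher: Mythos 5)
Your proof is correct and takes essentially the same route as the paper's: both reduce, via the incidence dichotomy of Proposition~\ref{prop_princ}, to showing that a primitive extension with $\delta(C/B)>0$ must be a one-point extension, and then read off $\delta(C/B)\leq 1$ from the one-point case. The only substantive difference is your maximality argument in the branch $B_c \not\leq C$: the paper's Case~1 passes directly from $\delta(Bc_1)=\delta(B)+1\leq\delta(C)$ to $B < Bc_1 < C$, which is not immediate (an intermediate $X \supseteq Bc_1$ with $\delta(X)=\delta(B)$ is not a priori excluded), and your device of taking a maximal $\delta$-dropping superset of $B_c$, checking it is strong in $C$ and strong over $B$ via Conclusion~\ref{conclusion_ax}(6), is exactly what is needed to close that step, so your write-up is if anything more complete than the paper's.
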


\begin{proof} Suppose that $B \leq C \in \mathbf{K}_0$, $\delta(C/B) > 0$ and $c_1 \neq  c_2 \in C - B$.
We make a case distinction:
\newline {\em Case 1}. $c_1$ or $c_2$ is not incident with any line from $B$.
\newline Without loss of generality $c_1$ is not incident with any line from $B$. Then, $\delta(Bc_1) = \delta(B) + 1 \leq \delta(C)$, where the second inequality is because $\delta(C/B) > 0$, and so $B < Bc_1 < C$. Hence, in this case we have that $B \leq C$ is {\em not} primitive.
\newline {\em Case 2}. $c_1$ and $c_2$ are both incident with a line from $B$.
\newline Then $\delta(B) = \delta(Bc_1) \leq \delta(C)$ and so $B < Bc_1 < C$. Hence, also in this case we have that $B \leq C$ is {\em not} primitive.
\newline Thus, from the above argument we see that if $B \leq C$ is primitive and $\delta(C/B) > 0$, then $C = B \cup \{ c \}$, and so $\delta(C/B) = 1$ \mbox{(cf. Proposition \ref{prop_princ}).}
\end{proof}

	\begin{remark} Notice that it is possible that $B \leq C \in \mathbf{K}_0$ is primitive, $\delta(C/B) = 0$ and $|C - B| \geq 2$. To see this, consider the plane whose geometric lattice is represented in Figure \ref{myfigure2} and let $B = \{ a, b, c \}$ and $C = \{ a, b, c, d, e, f\}$.
\begin{figure}[ht]
	\begin{center}
		\begin{tikzpicture}
\matrix (a) [matrix of math nodes, column sep=0.4cm, row sep=0.3cm]{
   &  & & & & abcdef \\
ab & ac & bc & & adf & bd & cde & ae & bef & cf \\
a  &  b  &  c & & d & & e & & f \\
& & & & & \emptyset \\};
\end{tikzpicture}
\end{center}  \caption{An example.}\label{myfigure2}
\end{figure}
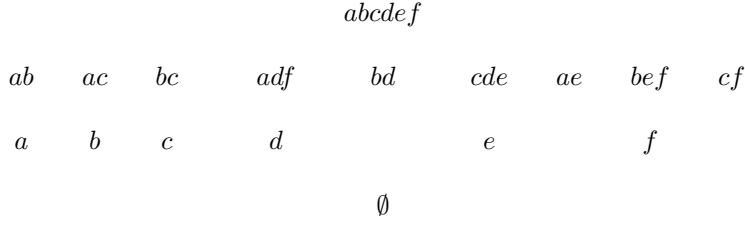
\end{remark}


	\begin{lemma}\label{ziegler_lemma2} Let $B \leq C \in \mathbf{K}_0$ be primitive, $C \leq \mathfrak{M}$, and suppose that $\delta(C/B) = 0$. Then $tp(C/B)$ is isolated and strongly minimal.
\end{lemma}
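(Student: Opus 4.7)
The plan is to prove isolation and strong minimality in two steps, both powered by the interplay of $\delta(C/B)=0$, the primitivity of $B\leq C$, and the strongness of $B$ in $\mathfrak M$.

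For isolation, I will take $\varphi(\bar x,\bar b)$ to be the quantifier-free $L$-diagram of $C$ over $B$, where $\bar b$ enumerates $B$ and $\bar c$ enumerates $C-B$. For any realization $\bar c'$ in $\mathfrak M$, set $C':=B\bar c'$, so that $C'\cong_{B}C$ as $L$-structures; in particular $\delta(C'/B)=\delta(C/B)=0$. Since $B\leq\mathfrak M$, every finite $E$ with $C'\subseteq E\subseteq\mathfrak M$ satisfies $\delta(E)\geq\delta(B)=\delta(C')$, and therefore $C'\leq\mathfrak M$. By the homogeneity of the monster over strong substructures (standard in Hrushovski constructions, derivable from the genericity of $P$ together with $\omega$-saturation), any two strong copies of $C$ over $B$ in $\mathfrak M$ are conjugate by an automorphism fixing $B$ pointwise; hence all realizations of $\varphi(\bar x,\bar b)$ share one complete type, so $\varphi$ isolates $tp(\bar c/B)$.

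For strong minimality, I will fix an arbitrary formula $\theta(\bar x,\bar d)$ over $\mathfrak M$ and, after replacing $\bar d$ by an enumeration of $D:=icl_{\mathfrak M}(B\bar d)$, reduce to $B\subseteq D\leq\mathfrak M$. Given a realization $\bar c'$ of $\varphi$, set $C':=B\bar c'$ (strong in $\mathfrak M$ by the previous paragraph). By Conclusion \ref{conclusion_ax}(6) applied to $D\leq\mathfrak M$ and $C'\subseteq\mathfrak M$, we have $D\cap C'\leq C'$; combined with the primitivity of $B\leq C'$, this forces the dichotomy $D\cap C'=C'$ (the \emph{algebraic case}) or $D\cap C'=B$ (the \emph{generic case}). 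The algebraic case places $\bar c'$ inside the finite set $D$, contributing only finitely many tuples.

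In the generic case, Lemma \ref{delta_lemma} yields $\delta(C'/D)\leq\delta(C'/B)=0$, while $D\leq\mathfrak M$ forces $\delta(DC')\geq\delta(D)$; together these give $\delta(DC')=\delta(D)$, whence $C'\dindep[B]D$ by Definition \ref{def_indep} and Lemma \ref{indep_lemma}. Stationarity (Lemma \ref{stationarity_lemma}(2)) then ensures that any two generic-case realizations share the same complete type over $D$, so $\varphi(\mathfrak M,\bar b)\cap\theta(\mathfrak M,\bar d)$ is finite or cofinite in $\varphi(\mathfrak M,\bar b)$; since iterated free amalgamation of $C$ over $B$ is realized in $\mathfrak M$ by genericity, the ambient set is infinite, completing strong minimality. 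The main obstacle I anticipate is extracting the algebraic/generic dichotomy from primitivity via the (nontrivial) fact that $D\cap C'$ is automatically strong in $C'$; once that dichotomy is in place, submodularity plus stationarity deliver the familiar two-type picture of a strongly minimal set, in the spirit of \cite[Proposition 4.10]{ziegler}.
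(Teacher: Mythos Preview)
Your argument is essentially the one the paper has in mind: it defers to \cite[Lemma~4.8]{ziegler}, and what you wrote is a faithful unpacking of that proof---isolation via the quantifier-free diagram and $\delta(C'/B)=0\Rightarrow C'\leq\mathfrak M$, then the algebraic/generic dichotomy from $D\cap C'\leq C'$ together with primitivity, and finally uniqueness of the generic type via stationarity.

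There is, however, one genuine slip, and it is exactly the point the paper singles out. In your last sentence you appeal to ``iterated free amalgamation of $C$ over $B$'' to get infinitely many realizations. In the present setting the free amalgam $C\otimes_B C$ need not even lie in $\mathbf{K}^*_0$: by Proposition~\ref{prop_princ}, a primitive extension with $\delta(C/B)=0$ may be of the form $C=Bp$ with $p$ incident to a line $\ell\in L(B)$, and then two free copies $p_1,p_2$ over $B$ violate the exchange axiom (both lie on $\ell$ but $\{p_1,p_2\}\cup\ell$ is not an $R$-clique in $C\otimes_B C$). The fix is precisely the substitution the paper makes: use the canonical amalgam $C\oplus_B C$ of Fact~\ref{notation_canonical_amalgam} (as already done in Lemma~\ref{lemma_acl}), which stays in $\mathbf{K}_0$ and, by Lemma~\ref{indep_lemma}, is the amalgam witnessing $\dindep$. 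With $\otimes$ replaced by $\oplus$ your infinity argument goes through, and the rest of your proof is correct as written.
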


	\begin{proof} As in the proof of \cite[Lemma 4.8]{ziegler} replacing the free amalgam $A \otimes_C B$ with the canonical amalgam $A \oplus_C B$ (cf. Notation \ref{notation_canonical_amalgam}).
\end{proof}

	\begin{corollary}\label{ziegler_cor} Let $B \leq C \in \mathbf{K}_0$, $C \leq \mathfrak{M}$, and suppose that $\delta(C/B) = 0$. Then:
	\begin{enumerate}[(1)]
	\item $tp(C/B)$ has finite Morley rank;
	\item the Morley rank of $tp(C/B)$ is at least the length of a decomposition of $C/B$ into primitive extensions.
	\end{enumerate}
\end{corollary}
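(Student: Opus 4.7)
The plan is to reduce to the primitive case handled by Lemma \ref{ziegler_lemma2} by decomposing $B \leq C$ as a chain of primitive extensions, and then combine the resulting strongly minimal pieces using standard additivity properties of Morley rank in $\omega$-stable theories.

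First I would pick a maximal chain $B = B_0 \leq B_1 \leq \cdots \leq B_n = C$ of intermediate $\leq$-closed substructures; such a chain exists by finiteness of $C$, and by maximality each step $B_i \leq B_{i+1}$ is primitive in the sense of Definition \ref{various_def}(2). Since $\delta$ is additive along the chain, $\sum_{i<n} \delta(B_{i+1}/B_i) = \delta(C/B) = 0$, and Lemma \ref{ziegler_lemma1} forces each $\delta(B_{i+1}/B_i) \in \{0,1\}$; hence every step has $\delta(B_{i+1}/B_i) = 0$. Then Lemma \ref{ziegler_lemma2} applies to each step to yield that $tp(B_{i+1}/B_i)$ is strongly minimal, so $MR(tp(B_{i+1}/B_i)) = 1$.

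For (1), I would invoke $\omega$-stability (Conclusion \ref{conclusion_stability}) together with the standard subadditivity of Morley rank: inducting on the chain gives $MR(tp(C/B)) \leq \sum_{i<n} MR(tp(B_{i+1}/B_i)) = n$, which is finite. For (2), I would apply Lascar's inequality $MR(\bar a\bar b/D) \geq MR(\bar a/D\bar b) + MR(\bar b/D)$ (valid in any $\omega$-stable theory, and it collapses to ordinary addition for the finite ranks that arise here) iteratively along the chain to conclude $MR(tp(C/B)) \geq \sum_{i<n} MR(tp(B_{i+1}/B_i)) = n$; since this argument works for every primitive decomposition, $MR(tp(C/B))$ dominates the length of any such decomposition.

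The main subtlety is that a primitive step may introduce several new points at once (as in the remark following Lemma \ref{ziegler_lemma1}), so $tp(B_{i+1}/B_i)$ is in general a type of a tuple rather than of a single element; but Lemma \ref{ziegler_lemma2} delivers strong minimality of this tuple-type directly, so the tuple-versus-point distinction dissolves and the Morley rank bookkeeping proceeds as in \cite[Proposition 4.10]{ziegler}.
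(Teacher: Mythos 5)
Your argument is correct and is essentially the paper's proof: the paper simply defers to Ziegler's Corollary 4.9, whose content is exactly your decomposition into primitive steps (each of $\delta$-value $0$ by additivity and Lemma~\ref{ziegler_lemma1}, hence strongly minimal by Lemma~\ref{ziegler_lemma2}), combined with the Lascar-type rank inequalities. The only phrase to treat with care is ``standard subadditivity of Morley rank''---the upper Lascar inequality can fail for Morley rank in general $\omega$-stable theories---but for a chain of isolated strongly minimal extensions the bound $MR \leq n$ does hold, and in any case item (1) only requires finiteness, while the lower inequality $MR(\bar a \bar b/D) \geq MR(\bar a/D\bar b) + MR(\bar b/D)$ used for (2) is valid for Morley rank in any totally transcendental theory.
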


	\begin{proof} Exactly as in \cite[Corollary 4.9]{ziegler}.
\end{proof}

	\begin{proposition}\label{prop_for_ziegler} There exists finite $B \leq \mathfrak{M}$ and elements $q_k$, for $k < \omega$, such that $d(q_k/B) = 0$, and the extension $cl(Bq_k)$ has decomposition length $k$.
\end{proposition}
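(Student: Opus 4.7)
The plan is to exhibit, for a single fixed finite $B \leq \mathfrak{M}$, a family of ``ladder'' configurations $C_k$ which strongly extend $B$ by a chain of $k$ single-element primitive extensions each of $\delta$-contribution $0$, arranged so that the top rung $q_k$ alone drags the entire chain into its intrinsic closure. This is the plane-theoretic adaptation of \cite[Proposition 4.10]{ziegler}.

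Concretely, take $B = \{a, b, c\}$ a generic triangle (three points with no non-trivial line), realized as a strong substructure of $\mathfrak{M}$ via the genericity of $P$; then $\delta(B) = 3$. For each $k \geq 1$ introduce fresh points $q_1, \ldots, q_k$ and declare $C_k$ to be the plane on $B \cup \{q_1, \ldots, q_k\}$ whose non-trivial lines are exactly
\[
\ell_1 = \{a, b, q_1\}, \qquad \ell_i = \{p_i, q_{i-1}, q_i\} \text{ for } 2 \leq i \leq k,
\]
where the $p_i \in B$ are chosen cyclically (say $p_2 = c,\ p_3 = a,\ p_4 = b,\ p_5 = c, \ldots$) so that consecutive $\ell_i, \ell_{i+1}$ meet only in $q_i$; this makes all $\ell_i$ pairwise distinct. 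A direct count gives $\delta(C_k) = (k + 3) - k = 3 = \delta(B)$. Moreover, for any $A' \subseteq C_k$ every non-trivial line of $A'$ must contain at least one $q_i \in A'$, so the number $N(A')$ of such lines is bounded by $|A' \cap \{q_1, \ldots, q_k\}|$; this gives $\delta(A') \geq |A' \cap B| \geq 0$, hence $C_k \in \mathbf{K}_0$. Applying the same bound to intermediates $B \subseteq X \subseteq C_k$ shows $\delta(X) \geq 3$, so $B \leq C_k$. By the genericity of $P$ (Definition~\ref{defgen}(1)) we realize $C_k$ as a strong extension of $B$ inside $\mathfrak{M}$, and take $q_k \in \mathfrak{M}$ to be the image of the specified top element.

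The chain $B \leq Bq_1 \leq Bq_1q_2 \leq \cdots \leq Bq_1 \cdots q_k = C_k$ is then a decomposition of $C_k / B$ into $k$ primitive extensions each of $\delta$-contribution $0$ (primitivity is automatic because each step adjoins a single point). The delicate point, which I expect to be the main obstacle, is verifying that $icl_{\mathfrak{M}}(Bq_k) = C_k$ rather than a proper subset. Parametrize a set $Bq_k \subseteq X \subseteq C_k$ by $S := \{i : q_i \in X\} \subseteq \{1, \ldots, k\}$ with $k \in S$; the count of non-trivial lines of $X$ is
\[
N(X) = \mathbf{1}[1 \in S] + \bigl|\{i \geq 2 : i-1, i \in S\}\bigr| \leq |S|,
\]
with equality precisely when $S$ is an initial segment $\{1, 2, \ldots, |S|\}$. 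Since $k \in S$, equality forces $S = \{1, \ldots, k\}$, i.e.\ $X = C_k$; for any other $X$ we get $N(X) \leq |S| - 1$ and $\delta(X) = (3 + |S|) - N(X) \geq 4$. Combined with $C_k \leq \mathfrak{M}$, which gives $\delta(icl_{\mathfrak{M}}(Bq_k)) \leq \delta(C_k) = 3$, this forces $icl_{\mathfrak{M}}(Bq_k) = C_k$. Hence $d(q_k / B) = \delta(C_k) - \delta(B) = 0$ and the extension $C_k / B$ admits a decomposition into $k$ primitive extensions, as required.
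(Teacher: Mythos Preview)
Your proof is correct and follows essentially the same zigzag (``ladder'') construction as the paper: start from a generic triangle $B$ and successively add points $q_i$, each lying on a single line through one base point and the previous $q_{i-1}$, so that $\delta$ stays constant and the top point drags the whole chain into its intrinsic closure. Your argument is in fact more detailed than the paper's---you verify explicitly that $icl_{\mathfrak{M}}(Bq_k)=C_k$ via the initial-segment count, whereas the paper simply asserts this---and your cyclic choice of base points $p_i$ cleanly avoids the degeneracy that can arise if the first new point is placed on the line $p_1\vee p_2$ and the next line is then taken through $p_2$ again.
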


	\begin{proof} Let $B \leq \mathfrak{M}$ be a simple rank $3$ matroid with domain $\{ p_1, p_2, p_3 \}$. By induction on $k < \omega$, we define $B \leq Q_k \leq \mathfrak{M}$ such that $q_k \in Q_k$ . For $k = 0$, let $Q_0 \leq \mathfrak{M}$ be an extension of $B$ with a new point $q_0$ under the line $p_1 \vee p_2$. For $k = m+1$, let $Q_k \leq \mathfrak{M}$ be an extension of $Q_m$ with a new point $q_k$ under the line $p_2 \vee q_m$ if $m$ is even, and under the line $p_1 \vee q_m$ if $m$ is odd. Then clearly $d(q_k/B) = 0$, and the extension $cl(Bq_k) = Q_k$ has decomposition length $k$.
\end{proof}

	We now restate our main theorem and prove it.

	\begin{theorem1.2} There exists a pre-dimension function $\delta$ on the class of finite planes (finite simple rank $3$ matroids) such that the corresponding ``Hrushovski's generic'' (cf. Definition~\ref{defgen}) exists, and so it is a plane $P$ (i.e. a simple rank $3$ matroid, cf.~Definition~\ref{def_matroid_intro}), and it satisfies the following conditions:
	\begin{enumerate}[(1)]
	\item $P$ contains the ``non-Desarguesian'' matroid (cf.~Figure~\ref{figure1}, or \cite[pg. 139]{welsh}), and so it is not algebraic (in the sense of matroid theory, cf.~Definition~\ref{def_proj});
	\item in $Th(P)$ intrinsic closure and algebraic closure coincide (cf. Definition \ref{geom_clo});
	\item\label{weak_elim} $Th(P)$ does {\em not} have weak elimination of imaginaries (cf. Definition~\ref{def_weak_elim});
	\item $Th(P)$ is not one-based (cf. Definition~\ref{def_one_based});
	\item $Th(P)$ is $\omega$-stable and has Morley rank $\omega$ (cf. \cite[Chapter~6]{marker});
	\item over algebraically closed sets forking in $Th(P)$ corresponds to the canonical amalgamation introduced in \cite[Theorem 4.2]{geometric_lattices} (cf. Remark \ref{remark_canonical_amalgam}).
	\end{enumerate}	
\end{theorem1.2}	

	\begin{proof} Concerning item (1), notice that if a matroid is algebraic, then so is any of its submatroids. Thus, $P$ is not algebraic since it contains the ``non-Desarguesian'' matroid from Example~\ref{non_des_ex}, which is explicitly shown not to be algebraic in \cite[Corollary, pg. 238]{lind_alg}. Item (2) is Conclusion \ref{conclusion_geom_clo}. Item (3) is Corollary \ref{weak_elimi}. Item (4) is by Proposition \ref{not_one_based} and Conclusion \ref{conclusion_geom_clo}. Concerning item (5), argue as in \cite[Proposition 4.10]{ziegler} using Corollary \ref{ziegler_cor} and Proposition \ref{prop_for_ziegler}. Item (6) is by Corollary~\ref{char_forking} and Conclusion~\ref{conclusion_geom_clo}.
\end{proof}

\end{document}